\begin{document}

\begin{abstract}
    \label{sec:abstract}
    In this paper, we introduce two new types of barrier certificates that are based on multiple functions rather than a single one. 
    A conventional barrier certificate for a stochastic dynamical system is a nonnegative real-valued function whose expected value does not increase as the system evolves. 
    This requirement guarantees that the barrier certificate forms a nonnegative supermartingale and can be used to derive a lower bound on the probability that the system remains safe.
    A key advantage of such certificates is that they can be automatically searched for using tools such as optimization programs instantiated with a fixed template.
    When this search is unsuccessful, the common practice is to modify the template and attempt the synthesis again.
    Drawing inspiration from logical interpolation, we first propose an alternative framework that uses a collection of functions to jointly serve as a barrier certificate. 
    We refer to this construct as an interpolation-inspired barrier certificate.
    Nonetheless, we observe that these certificates still require one function in the collection to satisfy a supermartingale condition.
    Motivated by recent work in the literature, we next combine $k$-induction with interpolation-inspired certificates to relax this supermartingale constraint.
    We develop a general and more flexible notion of barrier certificates, which we call $k$-inductive interpolation-inspired barrier certificates. 
    This formulation encompasses multiple ways of integrating interpolation-inspired barrier certificates with $k$-induction. 
    We highlight two specific instantiations among these possible combinations. 
    For polynomial systems, we employ sum-of-squares (SOS) programming to synthesize the corresponding set of functions. 
    Finally, through our case studies, we show that the proposed methods enable the use of simpler templates and yield tighter lower bounds on the safety probability.
\end{abstract}

\title[$k$-Inductive and Interpolation-Inspired Barrier Certificates for Stochastic Dynamical Systems]{$k$-Inductive and Interpolation-Inspired Barrier Certificates for Stochastic Dynamical Systems}

\thanks{
This work was supported by NSF under grants CNS-2111688 and CNS-2145184.}

\author[mohammed adib oumer]{Mohammed Adib Oumer} 
\author[vishnu murali]{Vishnu Murali} 
\author[majid zamani]{Majid Zamani}

\address{Department of Computer Science at the University of Colorado, Boulder, CO, USA.}
\email{\{mohammed.oumer,~vishnu.murali,~majid.zamani\}@colorado.edu}
\urladdr{https://www.hyconsys.com/members/moumer/}
\urladdr{https://www.hyconsys.com/members/vmurali/}
\urladdr{https://www.hyconsys.com/members/mzamani/}

\maketitle

\section{Introduction} 
\label{sec:intro}

Barrier certificates are a prominent method to verify the safety of dynamical systems. The authors of \cite{prajna2004safety}, propose the notion of a barrier certificate for continuous-time systems as a real-valued function whose zero level set separates the unsafe region from all the reachable region. The value of the barrier certificate is positive over a given set of unsafe states, nonpositive over a given set of initial states, and nonincreasing as a system evolves.
Thus, it acts as an inductive proof of safety.
This certificate is also used for stochastic systems \cite{prajna2004stochastic, prajna2007framework} where it provides probabilistic lower bounds on safety guarantees.
Here, a barrier certificate is a nonnegative real-valued function, whose value is greater than or equal to $1$ for the unsafe states, less than $1$ for the initial states and it remains nonincreasing in \emph{expectation} along the trajectory of the system. 
Thus, the certificate acts as a supermartingale as a system evolves and provides lower bounds on the probability of the system being safe. 
Although the search for such a certificate is automatable, it relies on users fixing a template.
The search for barrier certificates is carried out using SMT-based approaches \cite{de2011satisfiability} or optimization \cite{parrilo2003semidefinite,prajna2004safety}. In both of these approaches, we search for a function in a fixed template satisfying the conditions characterizing barrier certificates. Typically, when such a function is not successfully found for a given template, a different template is considered.
We instead address this challenge by proposing generalized notions of barrier certificates inspired by interpolation \cite{mcmillan2003interpolation} and $k$-induction \cite{sheeran2000checking, donaldson2011software}, which are typically used in establishing inductive invariants for hardware and software systems.

An inductive invariant is a property defined over the states of a system such that: $i)$ it holds for all initial states, and $ii)$ whenever it holds for a given state, it also holds for the successor state obtained by applying the transition function. 
By induction, this guarantees that the inductive invariant holds for every reachable state of the system.
If, in addition, the negation of this property holds for all unsafe states, then it constitutes a proof of safety. 
However, a property that is valid for all reachable states is often not inductive as is.
In these situations, a typical strategy is to \emph{strengthen} the property—using techniques such as interpolation or alternative inductive frameworks like $k$-induction—by expressing it as a conjunction of several properties, thereby obtaining an inductive invariant.

A barrier certificate is a discretization-free functional inductive invariant and, as such, the search for such functions suffers from similar issues.
Typically, the conditions of barrier certificates are imposed over a single function.
However, these conditions can be restrictive.
One option to change the notion of induction, as shown in \cite{anand2022kstochastic}, is through the use of $k$-induction. 
Here, the authors still rely on a single function to act as a $k$-inductive barrier certificate, where they relax the supermartingale condition at each time step to a $c$-martingale condition for less than $k$ steps and a supermartingale condition for every $k$ steps. 

Another option to relax the standard conditions is by allowing multiple functions to act as barrier certificates via interpolation \cite{mcmillan2003interpolation}, which we employ in this work. 
We consider a notion of interpolation-inspired barrier certificates that allows us to use a broader range of function templates as proofs of safety, and can be searched for similar to standard barrier certificates. 
Next, we generalize the notion of $k$-inductive barrier certificates proposed in \cite{anand2022kstochastic} by considering a $k$-inductive barrier certificate as a set of functions.
Finally, as both $k$-induction and interpolation provide valid but incomparable advantages when trying to find barrier certificates, we show that they can be combined to form a more general formulation of barrier certificates, which we call $k$-inductive interpolation-inspired barrier certificates.

\vspace{0.5em}
\noindent \textbf{Contributions.}
The contributions of the paper are listed below.

\begin{enumerate}
    \item We introduce new concepts of interpolation-inspired barrier certificates and $k$-inductive interpolation-inspired barrier certificates, whose conditions are less restrictive than those of standard barrier certificates.
    \item We formulate an objective-driven SOS programming approach to search for the proposed barrier certificates using a fixed polynomial template.
    \item We show that these new certificates can admit simpler templates as valid certificates, thereby making the search for them computationally more tractable.
    \item We illustrate that the proposed certificates enhance the safety probability of a stochastic system.
    \item We further demonstrate that, by appropriately choosing certain hyperparameters, our relaxed formulations recover the classical barrier-certificate conditions, establishing that our framework strictly generalizes the standard setting.
\end{enumerate}

\vspace{0.5em} \noindent \textbf{Related works.}
Inductive invariants and incremental inductive proofs are important tools in verifying the safety of finite state-transition systems as seen in \cite{zhang2004incremental, cabodi2008strengthening, bradley2011sat}. 
Typically, such systems are described as a set of logical variables, where the initial and unsafe states, as well as the transition relation, are described as logical formulae.
The safety verification goal is to ensure that the negation of the formula representing the unsafe states is an inductive invariant.
Unfortunately, this is often not inductive.
Thus, a prominent approach to make it inductive is to incrementally strengthen this formula via 
interpolation-based approaches \cite{mcmillan2003interpolation, bradley2012understanding}. 
Given a property of interest, such proofs check whether it is an inductive invariant. 
If not, they try to incrementally constrain it till an inductive proof is obtained. 
In the context of bounded model checking, (logical) interpolation unrolls the transition function a fixed number of times and finds intermediate formulae called interpolants until an inductive invariant formula is found.
IC3 \cite{bradley2011sat,bradley2012understanding} uses overapproximating frames and counterexamples to build incremental formulae one step at a time until an inductive invariant is found. Both of these approaches use multiple formulae to form an inductive invariant. 

Since barrier certificates are functional inductive invariants, we now compare our work with works that used multiple functions as barrier certificates. 
The design and utilization of standard barrier certificates for stochastic systems has been carried out in various works such as \cite{prajna2004stochastic,jagtap_2020_formal,huang2017probabilistic,clark2021control,xue2024reach,xue2024sufficient}.
The authors of~\cite{anand2019verification, feng2020unbounded} consider multiple functions as safety certificates for stochastic systems. 
Our work differs from the latter as follows.
The authors of \cite{anand2019verification} consider multiple barrier certificates for the case of switched stochastic systems. They design different standard barrier certificates for each mode of a switched system. In contrast, we consider multiple functions that replace the purpose of a standard barrier certificate. For instance, an interpolation-inspired barrier certificate can be designed for each mode of a switched system. 
The authors of \cite{feng2020unbounded} propose a similar idea as \cite{sogokon2018vector} for continuous time stochastic dynamical systems. The similarity with our work is in the use of multiple functions and exploiting supermartingale properties for bounding the probabilities. The first difference is that the conditions for the initial and unsafe states are imposed over all the functions in their work while our proposed method imposes the conditions over all the functions only for the unsafe states. Furthermore, in their study, they assign the relationship between the functions to a Metzler matrix, whereas our approach does not have to be represented in this manner.
Instead of all the functions, we only need one function to act as a supermartingale as the system evolves.
The authors of \cite{lewis2024verification} considered a notion of time-varying $k$-inductive barrier certificates.
We use a similar argument to propose a $k$-inductive barrier certificate that uses multiple functions. These functions can be considered time-varying such that we pick the $i^{th}$ function for every $t = rk + i$ time-step for some nonnegative integer $r$.
We refer the interested readers to Section~\ref{ss:relaxk} for more details.

\vspace{0.5em} \noindent \textbf{Organization.}
In Section \ref{sec:prelims}, we outline notations, define stochastic dynamical systems, and review standard barrier certificates.
In Section \ref{sec:interp-k}, we introduce the first key theoretical result of our paper by proposing interpolation-inspired barrier certificates.
We then generalize $k$-inductive barrier certificates and present the second key theoretical result: combining them with interpolation-inspired barrier certificates.
An implementation of the proposed techniques is discussed in Section \ref{sec:synthesis}, followed by case studies
in Section \ref{sec:case}.

\section{Preliminaries}
\label{sec:prelims}

We consider the probability space $(\Omega, \Ff, \Pro)$ where $\Omega$ is the sample space, $\Ff$ is the $\sigma$-algebra on $\Omega$ that contains subsets of $\Omega$ as events in the probability space, and $\Pro$ is the probability measure that assigns to each event in the event space a probability, which is a number between 0 and 1. We consider random variables to be measurable functions of the form $X: (\Omega, \Ff) \rightarrow (S_X, \Ff_X)$ from the sample space $\Omega$ to another measurable space $S$ called the state space. Each random variable $X$ is associated with a probability measure on $(S_X, \Ff_X)$ as $\Pr \{Z\} = \Pro \{X^{-1}(Z)\}$ for any $Z \in \Ff_X$.

Let $X$ be topological space. 
The collection of all Borel sets on $X$ forms the Borel $\sigma$-algebra $B(X)$. 
The map $f: X\rightarrow Y$ is said to be measurable when it is Borel-measurable.

We use $\N$ and $\R$ to denote the set of natural numbers and reals, respectively. 
For $m \in \R$, we use $\R_{\geq m}$ and $\R_{> m}$ to denote the intervals $[m, \infty)$ and $(m,\infty)$, respectively.
Similarly, for any natural number $n \in \N$, we use $\N_{\geq n}$ to denote the set of natural numbers greater than or equal to $n$. The $n$-dimensional Euclidean space is denoted by $\mathbb{R}^n$.

We use $\exists$ and $\forall$ to denote the existential and universal quantifiers, respectively. We use logical operators $\lor,\ \land,\ \neg$ and $\implies$ for disjunction (logical OR), conjunction (logical AND), negation (logical NOT) and implication, respectively.

For a function $f: \Xx \times \mathcal{A} \rightarrow \Xx$ and $k \in \N_{\geq 1}$, we use $f^k: \Xx \times \mathcal{A}^k \to \Xx$ to denote the composition of the function $f$ by itself $k$-times (i.e. given a set of $k$ values $ (a_0, a_1, \dots ,a_{k-1})$, we define $f^k(x,a_0) = f(x,a_0)$ for $k=1$ and $f^k(x, (a_0, a_1, \dots ,a_{k-1})) = f(f^{k-1}(x,(a_0, \dots ,a_{k-2})),a_{k-1})$ for $k>1$).
A set of $N+1$ functions is denoted using $\Bb_i,\ \forall i \in \{0,1,\dots,N\}$.
Given a collection of sets $\Xx_i,\ i = \{0,1,\dots,N\}$, we use $\bigcup_{i=0}^N \Xx_i$ to denote the union of the sets $\Xx_i$. Given two sets $\Xx$ and $\Yy$, $\Xx \backslash \Yy := \{x: x\in \Xx \text{ and } x\notin \Yy\}$.
We use $\Nn(\mu,\sigma^2)$ to denote a normal distribution with mean $\mu \in \R$ and variance $\sigma^2 \in \R_{>0}$.

\subsection{Stochastic Dynamical System}
\label{ss:prelims_system}

\begin{definition}
    A discrete-time stochastic dynamical system (dt-SS) $\Sys$ is given by the tuple:
    \begin{align}
        \Sys = (\Xx,\Xx_0, w, f),
    \end{align} where
    \begin{itemize}
        \item $\Xx \subseteq \R^n$ is a Borel space that represents the state set of $\Sys$;
        \item $\Xx_0 \subseteq X$ denotes a set of initial states;
        \item $w:= \{w(t): \Omega \rightarrow \Ww,\ t \in \N \}$ is a sequence of i.i.d random variables from a sample space $\Omega$ to the measurable space $(\Ww, \Ff_w)$, commonly interpreted as system noise; and
        \item $f: \Xx \times \Ww \rightarrow \Xx$ is a measurable function that describes the state evolution of $\Sys$.
    \end{itemize}
    \label{def:system}
\end{definition} 
For $x(t)$, the state of the system at time step $t\in \mathbb{N}$, the state of the system in the next time step is given by the following stochastic difference equation:
\begin{align}
    x(t+1) = f(x(t), w(t)), && \forall x(t) \in \Xx.
\end{align}

We use $\mathbf{x}_{x_0} = (x(0),x(1),x(2),\dots)$ to denote the solution process generated by $\Sys$ starting from the initial state $x(0) = x_0 \in \Xx_0$. 
We denote the state at time step $t \in \N$ for the solution process $\mathbf{x}_{x_0}$ as $\mathbf{x}_{x_0}(t)$ .
Now we define reachable states of a dt-SS.
\begin{definition}[Reachability]
  We say a state $x(t_1)$ of a dt-SS $\Sys$ given in Definition \ref{def:system} is reachable from the state $x(t_0)$ if there exists a solution process $\mathbf{x}_{x(t_0)}$ which contains $x(t_1)$. That is, $x(t_1) = f^i(x(t_0),w_i(t_0))$, for some $i \in \mathbb{N}$ and some $w_i(t_0) = [w(t_0);\dots;w(t_0+i-1) = w(t_1-1)]$, which is a vector of noise terms from $t_0$ to $t_1-1$.
  \label{def:reachability}
\end{definition}

Since the codomain of the map $f$ is $\Xx$, this implicitly implies that the state set $\Xx$ is forward invariant, which might seem conservative when dealing with unbounded noise, especially when $\Xx$ is bounded. Following the convention introduced in \cite{kushner1967stochastic, xue2024sufficient, anand2022kstochastic}, to ensure the forward invariance of $\Xx$, we adopt the standard assumption of stopping the stochastic process if it reaches the boundary of $\Xx$: 
\begin{assumption}
\label{assumption:stoppedprocess}
For any solution process $\mathbf{x}_{x_0}$
of dt-SS $\Sys$ starting
from some initial state $x_0 \in \Xx_0$, we have $\mathbf{x}_{x_0}(t) \in \Xx$ for all $t\in \N$. This is ensured by considering a ``stopped process" $\overline{\mathbf{x}}_{x_0}(t)$ given as:
\begin{align}
    \overline{\mathbf{x}}_{x_0}(t) = 
    \begin{cases}
        \mathbf{x}_{x_0}(t) & \forall~t<\tau,\\
        \mathbf{x}_{x_0}(\tau-1) & \forall~t\geq\tau.
    \end{cases}
\end{align}
where $\tau\in\N$ is the first exit time of $\mathbf{x}_{x_0}$ from $\Xx$.
\end{assumption}

Our understanding of a stopped process is consistent with the treatment in~\cite{abate2025quantitative,henzinger2025supermartingale}, where the transition function $f$ is interpreted as piecewise-defined: it follows the system dynamics while the state remains in $\Xx$, but not once it leaves $\Xx$ (that is, $f(x,w) = x$ whenever $f(x,w) \notin \Xx$).
From this point on, we assume for simplicity of exposition that the state space $\Xx$ is forward invariant, and we no longer explicitly refer to the stopped process.

As we deal with stochastic systems with noise which consist of unbounded support, we are interested in obtaining probabilistic guarantees over the satisfaction of safety for a given dt-SS $\Sys$.
Particularly, we would like to compute a tight lower bound on the probability of satisfying safety. We present the definition of probabilistic satisfaction of safety below.

\begin{definition}[Safety Probability]
\label{def:safetyprob}
    Let  $\Xx_0 \subseteq \Xx$ and $\Xx_u \subseteq \Xx$ represent the set of initial states and unsafe states, respectively for a dt-SS $\Sys$ given in Definition \ref{def:system}. We say $\Sys$ satisfies safety with a probability bound of $\lambda$ if the solution processes of $\Sys$ starting from any $x_0 \in \Xx_0$ never reach $\Xx_u$ with a probability of at least $p$, i.e.
    \begin{align*}
        \Pro\{\mathbf{x}_{x_0}(t) \notin \Xx_u,\ \forall t \in \N\} \geq p,~~~\forall x_0\in \Xx_0.
    \end{align*}
\end{definition} 
The goal of safety verification for a dt-SS $\Sys$ is to compute the probability bound constant $0\leq p \leq 1$.

\subsection{Barrier Certificates}
\label{ss:prelims_bc}

For safety verification of a dt-SS $\mathcal{S}$ as in Definition \ref{def:system}, we now discuss the notion of barrier certificates \cite{prajna2004safety} that provide sufficient conditions for safety.

\begin{definition}[Barrier Certificate]
\label{def:bc}
   A function $\Bb: \Xx \to \R$ is a barrier certificate for a dt-SS $\Sys$ with respect to a set of initial states $\Xx_0$ and a set of unsafe states $\Xx_u$ if there exists a constant $0\leq \gamma \leq 1$ such that: 
    \begin{align}
        \label{eq:bar_cond_0}
        & \Bb(x) \geq 0 && \forall x \in \Xx,\\
        \label{eq:bar_cond_1}
        & \Bb(x) \leq \gamma && \forall x \in \Xx_0, \\
        \label{eq:bar_cond_2}
        & \Bb(x) \geq 1 && \forall x \in \Xx_u, \\  
        \label{eq:bar_cond_3}
        & \E[\Bb(f(x,w)) | x] \leq \Bb(x) && \forall x \in \Xx\backslash \Xx_u.
    \end{align}
\end{definition}

Observe that condition \eqref{eq:bar_cond_3} ensures that $\Bb$ acts as a supermartingale, \textit{i.e.}, the expected value of the function is nonincreasing at every time step. Definition \ref{def:bc} can be used to obtain the lower bound on the probability that the dt-SS $\Sys$ satisfies safety.

\begin{theorem}[Barrier certificates imply safety \cite{prajna2007framework}]
    \label{thm:bc}
    Consider a dt-SS $\Sys$ as in Definition \ref{def:system}. Let there exists a function $\Bb: \Xx \rightarrow \mathbb{R}$ for $\Sys$ such that it is a barrier certificate as in Definition \ref{def:bc} for some $0\leq \gamma \leq 1$. The probability that the solution process $\mathbf{x}_{x_0}$ starting from an initial state $x_0 \in \Xx_0$ does not reach unsafe states $\Xx_u$ is bounded by
    \begin{align}
        \Pro\{\mathbf{x}_{x_0}(t) \notin \Xx_u,\ \forall t \in \N\} \geq 1-\gamma.
        \label{eq:lowbndbc}
    \end{align}
\end{theorem}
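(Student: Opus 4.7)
The plan is to cast $\Bb$ evaluated along the trajectory as a nonnegative supermartingale stopped on entry to $\Xx_u$, and then invoke Ville's inequality (the nonnegative supermartingale version of Doob's maximal inequality) to bound the probability that the trajectory ever reaches $\Xx_u$.

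First, I would introduce the stopping time $\tau_u := \inf\{t \in \N : \mathbf{x}_{x_0}(t) \in \Xx_u\}$ (with the convention $\inf \emptyset = \infty$) and consider the stopped process $Y(t) := \Bb(\mathbf{x}_{x_0}(t \wedge \tau_u))$. Condition \eqref{eq:bar_cond_0} gives $Y(t) \geq 0$ for all $t$. To verify the supermartingale property, I split cases on whether the trajectory has already been stopped: if $t \geq \tau_u$, then $Y(t+1) = Y(t)$ deterministically, so $\E[Y(t+1)\mid \mathcal{F}_t] = Y(t)$; if $t < \tau_u$, then $\mathbf{x}_{x_0}(t) \in \Xx \setminus \Xx_u$ and condition \eqref{eq:bar_cond_3} together with the Markov property of the i.i.d.\ noise yields $\E[Y(t+1)\mid \mathcal{F}_t] = \E[\Bb(f(\mathbf{x}_{x_0}(t),w(t)))\mid \mathbf{x}_{x_0}(t)] \leq \Bb(\mathbf{x}_{x_0}(t)) = Y(t)$. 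Hence $\{Y(t)\}_{t\in\N}$ is a nonnegative supermartingale.

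Next, I would apply Ville's inequality: for any $\lambda > 0$,
\begin{align*}
\Pro\Bigl\{\sup_{t\in\N} Y(t) \geq \lambda\Bigr\} \leq \frac{\E[Y(0)]}{\lambda}.
\end{align*}
Taking $\lambda = 1$, condition \eqref{eq:bar_cond_1} gives $\E[Y(0)] = \Bb(x_0) \leq \gamma$, so $\Pro\{\sup_t Y(t) \geq 1\} \leq \gamma$. Finally, if the trajectory ever enters $\Xx_u$, then $\tau_u < \infty$ and $Y(\tau_u) = \Bb(\mathbf{x}_{x_0}(\tau_u)) \geq 1$ by condition \eqref{eq:bar_cond_2}, so $\sup_t Y(t) \geq 1$ on that event. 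Contrapositively, $\Pro\{\mathbf{x}_{x_0}(t)\notin\Xx_u,\ \forall t\in\N\} \geq 1 - \Pro\{\sup_t Y(t) \geq 1\} \geq 1-\gamma$, which is \eqref{eq:lowbndbc}.

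The only subtle step is justifying Ville's inequality itself (or equivalently constructing the required filtration and verifying measurability), but this is a textbook fact about nonnegative supermartingales and can simply be cited. The rest of the argument is a routine case split on whether the stopping time has been reached, together with direct substitution of the four barrier-certificate conditions; I do not anticipate any real obstacle beyond bookkeeping of the stopped process.
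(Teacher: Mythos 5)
Your proposal is correct and follows essentially the same route the paper takes: the paper cites this theorem from the literature rather than reproving it, but its own proof of the generalization (Theorem \ref{thm:ibc}, in the case $\ell = 0$) is exactly your argument — view $\Bb$ along the trajectory as a nonnegative supermartingale via conditions \eqref{eq:bar_cond_0} and \eqref{eq:bar_cond_3}, apply Ville's inequality with threshold $1$ and initial bound $\gamma$ from \eqref{eq:bar_cond_1}, and use \eqref{eq:bar_cond_2} to relate the level set $\{\Bb \geq 1\}$ to $\Xx_u$. Your explicit stopped-process construction via $\tau_u$ is a slightly more careful handling of the fact that \eqref{eq:bar_cond_3} only holds on $\Xx\backslash\Xx_u$, which the paper glosses over but which is standard.
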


We remark that Definition \ref{def:bc} is only useful if the initial set $\Xx_0$ and the unsafe set $\Xx_u$ are disjoint.

The standard strategy for finding barrier certificates starts by specifying a template, where the certificate is expressed as a linear combination of predetermined basis functions. 
For instance, if the template is a polynomial of a fixed degree, the corresponding basis functions are chosen as monomials. 
If the template instead is a neural network \cite{abate2021fossil}, one typically fixes the number of layers and nodes. 
Subsequently, search procedures such as Satisfiability Modulo Theory (SMT) solvers \cite{de2011satisfiability} or Sum-of-Squares (SOS) programming \cite{parrilo2003semidefinite} are employed to determine coefficients that satisfy conditions \eqref{eq:bar_cond_0}-\eqref{eq:bar_cond_3}. 
When no barrier certificate is obtained, a frequently used remedy is to modify the template. 
For example, one might increase the polynomial degree or alter the neural network architecture, thereby changing the template. 
However, such modifications typically increase the computational burden of the search and can still yield inconclusive outcomes. 
As an illustration, the authors of \cite{berger2024cone} are able to compute certificates in about $2$ minutes, but require up to $6$ hours—after which a time-out occurs—to verify these certificates with the SMT solver z3 \cite{de2008z3}. 
In what follows, we present, through the next example, an alternative method to address this difficulty.

\begin{example}[label=ex1]
\label{ex:ex1}
Consider the one-dimensional discrete-time state-space model
\begin{align}
    \label{eq:1d}
    \Sys: x(t+1) = 0.5x(t) + 0.05w(t).
\end{align}
The state space, initial set, and unsafe set are defined as $\Xx = [0,3]$, $\Xx_0 = [2,2.3]$, and $\Xx_u = [1.6,1.9]$, respectively.
We assume $w(t) \sim \mathcal{N}(0,1)$ and treat $\gamma$ as a decision variable, maximizing $1 - \gamma$ as a lower bound on the safety probability.
We then solve an SOS optimization problem based on conditions \eqref{eq:bar_cond_0}-\eqref{eq:bar_cond_3}. 
Using a degree-three polynomial template, the solver returns a minimum safety probability of $0.00624$ (i.e., $\gamma = 0.99376$). 
Increasing the polynomial degree to six yields a safety probability of at least $0.108$ (i.e., $\gamma = 0.892$). 
Further raising the degree to ten results in a safety probability lower bound of $0.389$ (i.e., $\gamma = 0.611$).
\end{example}

In the next section, we introduce several notions of multiple barrier certificates that incorporate additional hyperparameters, yielding less conservative conditions for establishing probabilistic safety guarantees.  
These notions, termed interpolation-inspired barrier certificates and $k$-inductive interpolation-inspired barrier certificates, extend the standard conditions, enable the replacement of a single complex template with multiple simpler ones (for instance, using several lower-degree polynomials instead of one higher-degree polynomial), and allow us to derive tighter probabilistic safety bounds for Example~\ref{ex:ex1}.

\section{Interpolation-Inspired and $k$-Inductive Barrier Certificates}
\label{sec:interp-k}

Here, we introduce a notion of interpolation-inspired barrier certificates and discuss their relation to $k$-inductive barrier certificates.
First, we discuss how ideas from (logical) interpolation may be used to consider different conditions for barrier certificates.
For a comprehensive explanation of (logical) interpolation related to inductive invariants in the context of hardware verification as discussed in \cite{mcmillan2003interpolation, bradley2011sat}, the interested reader is referred to Appendix~\ref{appendix:iii}.

\subsection{Interpolation-Inspired Barrier Certificate (IBC)}
\label{ss:ibc}

Here, we introduce a notion of interpolation-inspired barrier certificates (IBC) and demonstrate their efficacy.
The intuition behind these certificates is described in Appendix~\ref{appendix:ibc}.

\begin{definition}[IBC]
\label{def:ibc}
Consider a dt-SS $\Sys$ as in Definition \ref{def:system}. A set of functions $\Bb_i: \Xx \rightarrow \R$, for all $0\leq i\leq \ell$, is an IBC for $\Sys$ if there exist constants $0\leq \gamma \leq 1$, $\ell \in \N$, $\alpha_i \in \R_{> 0}$, $\ 0\leq i < \ell$ such that:

\begin{align}
    \label{eq:ibc0}
    & \Bb_i(x) \geq 0 && \forall x \in \Xx,\ 0 \leq i\leq \ell,\\
    \label{eq:ibc1}
    &\Bb_0(x) \leq \gamma && \forall x \in \Xx_0,\\
    \label{eq:ibc2}
    &\Bb_i(x) \geq 1 && \forall x \in \Xx_u,\ 0 \leq i\leq \ell,\\
    \label{eq:ibc3}
    &\E[\Bb_{i+1}(f(x,w)) | x] \leq \alpha_i \Bb_i(x)
    && \forall x \in \Xx \backslash \Xx_u,\ 0 \leq i < \ell,\\
    \label{eq:ibc4}
    &\E[\Bb_{\ell}(f(x,w)) | x] \leq \Bb_{\ell}(x) && \forall x \in \Xx \backslash \Xx_u.
\end{align}
\end{definition}

Definition \ref{def:ibc} can be used to obtain the lower bound on the probability that a dt-SS $\Sys$ is safe.

\begin{theorem}[IBCs imply safety]
    \label{thm:ibc}
    Consider a dt-SS $\Sys$ as in Definition \ref{def:system}. Let there exist a set of functions $\Bb_i: \Xx \rightarrow \mathbb{R}$, $\ 0\leq i\leq \ell$, for $\Sys$ such that it is an IBC as in Definition \ref{def:ibc} for some $0\leq \gamma \leq 1$, $\alpha_i \in \R_{> 0}$, $\ 0\leq i < \ell$. The probability that the solution process $\mathbf{x}_{x_0}$ starting from an initial state $x_0 \in \Xx_0$ does not reach unsafe set $\Xx_u$ is lower bounded by
    \begin{align}
        \Pro\{\mathbf{x}_{x_0}(t) \notin \Xx_u,~\forall t \in \N\} \geq 1-\gamma \left(1 + \prod_{i=0}^{\ell-1}\alpha_i + \sum_{t=0}^{\ell-2}\prod_{i = 0}^{t}\alpha_i\right).
        \label{eq:ibclowbnd}
    \end{align}
\end{theorem}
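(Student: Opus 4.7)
The plan is to bound the hitting probability $\Pro\{\tau_u<\infty\}$, where $\tau_u:=\inf\{t\in\N:\mathbf{x}_{x_0}(t)\in\Xx_u\}$, and then take complements to recover \eqref{eq:ibclowbnd}. I split the event $\{\tau_u<\infty\}$ at time $\ell$: on $\{\tau_u\leq \ell-1\}$ the chain of interpolation inequalities \eqref{eq:ibc3} accumulates a product of $\alpha_i$'s, whereas on $\{\tau_u\geq \ell\}$ the function $\Bb_\ell$ behaves as a genuine nonnegative supermartingale by \eqref{eq:ibc4}, so the classical barrier argument (Theorem~\ref{thm:bc}) can be reused.

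For the initial segment, I would introduce $q_t:=\E[\Bb_t(\mathbf{x}_{x_0}(t))\,\mathbf{1}_{\{\tau_u\geq t\}}]$ for $0\leq t\leq \ell$ and prove by induction on $t$ that $q_t\leq \gamma\prod_{i=0}^{t-1}\alpha_i$ (empty product $=1$). The base case $q_0=\Bb_0(x_0)\leq \gamma$ is immediate from \eqref{eq:ibc1}. For the inductive step, because $\{\tau_u\geq t\}$ is measurable with respect to the history up to time $t-1$ and forces $\mathbf{x}_{x_0}(t-1)\in \Xx\setminus\Xx_u$, the tower property combined with \eqref{eq:ibc3} and the nonnegativity \eqref{eq:ibc0} yields
\begin{equation*}
q_t \leq \alpha_{t-1}\,\E[\Bb_{t-1}(\mathbf{x}_{x_0}(t-1))\,\mathbf{1}_{\{\tau_u\geq t\}}] \leq \alpha_{t-1}\,q_{t-1}.
\end{equation*}
Condition \eqref{eq:ibc2} gives $\Bb_t(\mathbf{x}_{x_0}(t))\geq 1$ on $\{\tau_u=t\}$, so Markov's inequality yields $\Pro\{\tau_u=t\}\leq q_t$. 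Summing over $t=0,\dots,\ell-1$ produces $\Pro\{\tau_u\leq \ell-1\}\leq \gamma\bigl(1+\sum_{t=0}^{\ell-2}\prod_{i=0}^{t}\alpha_i\bigr)$.

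For the tail, I would invoke the Markov property at the deterministic time $\ell$: on $\{\tau_u\geq \ell\}$ the shifted process $(\mathbf{x}_{x_0}(\ell+s))_{s\geq 0}$ is itself a dt-SS trajectory starting from $\mathbf{x}_{x_0}(\ell)\in\Xx\setminus\Xx_u$, and $\Bb_\ell$ fulfils the standard barrier conditions for it with initial value $\Bb_\ell(\mathbf{x}_{x_0}(\ell))$ in place of $\gamma$. Theorem~\ref{thm:bc} (equivalently, Ville's maximal inequality applied to the nonnegative supermartingale obtained by stopping $\Bb_\ell\circ \mathbf{x}_{x_0}$ at $\tau_u$) then yields $\Pro\{\tau_u<\infty\mid\mathcal{F}_\ell\}\,\mathbf{1}_{\{\tau_u\geq \ell\}}\leq \Bb_\ell(\mathbf{x}_{x_0}(\ell))\,\mathbf{1}_{\{\tau_u\geq \ell\}}$. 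Taking expectations and using the $t=\ell$ instance of the inductive bound gives $\Pro\{\ell\leq\tau_u<\infty\}\leq q_\ell\leq \gamma\prod_{i=0}^{\ell-1}\alpha_i$. Adding the two contributions reproduces the claimed upper bound on $\Pro\{\tau_u<\infty\}$, and complementing yields \eqref{eq:ibclowbnd}.

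The main obstacle I anticipate is the measurability bookkeeping: one has to exploit that $\{\tau_u\geq t\}$ is $\mathcal{F}_{t-1}$-measurable (rather than only $\mathcal{F}_t$-measurable) in order to pull \eqref{eq:ibc3} inside the conditional expectation, and to invoke the Markov property cleanly at time $\ell$ so that the tail reduces to Theorem~\ref{thm:bc} without re-deriving the supermartingale inequality. Assumption~\ref{assumption:stoppedprocess} makes this reduction routine once it is stated carefully.
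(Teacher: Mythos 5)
Your proposal is correct and follows essentially the same route as the paper's proof: split the unsafe event at time $\ell$, bound the prefix by chaining condition \eqref{eq:ibc3} through the tower property and Markov's inequality, and bound the tail by Ville's inequality for the nonnegative supermartingale $\Bb_\ell$ started at time $\ell$. The one substantive difference is your use of the hitting time $\tau_u$ and the indicators $\mathbf{1}_{\{\tau_u\geq t\}}$, which makes the restriction to $\Xx\setminus\Xx_u$ explicit when invoking \eqref{eq:ibc3}; the paper's displayed induction \eqref{eq:exp1}--\eqref{eq:exp3} applies that condition without conditioning on the trajectory having avoided $\Xx_u$, so your bookkeeping is the more careful of the two while arriving at the same bound.
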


Observe that smaller values of $\gamma$ and  $\alpha_i$ provide better bounds. In particular, if we have $\alpha_i = 1$ for all $0 \leq i < \ell$, then the probability of safety is upper bounded by $1 - \gamma (1 + \ell)$. 
\begin{proof}
    Following condition \eqref{eq:ibc2}, we have $\Xx_u \subseteq \{x\in \Xx: \Bb_i(x) \geq 1, \ 0\leq i \leq \ell\}$. Now, we can separate the probability of visiting an unsafe state into visiting the unsafe state in less than and after $\ell$ time steps as follows:
    \begin{align}
        \Pro\{\exists t \in \N:& \mathbf{x}_{x_0}(t) \in \Xx_u\} \nonumber \\
        &\leq \Pro\{\exists 0\leq t< \ell: \mathbf{x}_{x_0}(t) \in \Xx_u\} \nonumber\\ 
        \label{eq:ibcproof1}
        &\hspace{1em}+ \Pro\{\exists t\geq \ell: \mathbf{x}_{x_0}(t) \in \Xx_u\} \\
        &\leq \Pro\{\exists 0\leq t< \ell: \Bb_t(x(t)) \geq 1\} \nonumber\\
        \label{eq:ibcproof2}
        &\hspace{1em}+ \Pro\{\exists t\geq \ell:\Bb_{\ell}(x(t)) \geq 1\}.
    \end{align}
    Each of these terms can be upper bounded with the use of Boole's inequality and Markov's inequality as follows:
    \begin{align}
        \label{eq:ibcproof3}
        \Pro\{& \exists 0\leq t < \ell:\Bb_t(x(t)) \geq 1\} \\
        &\leq \Pro \left\{\bigcup_{t=0}^{\ell-1} (\Bb_t(x(t)) \geq 1) \right\} \leq \sum_{t=0}^{\ell-1} \Pro\{\Bb_t(x(t)) \geq 1\} \\
        &\leq \E[\Bb_0(x(0))] + \sum_{t=1}^{\ell-1} \E[\Bb_t(x(t))]. 
    \end{align}

    Using the law of total expectation and condition \eqref{eq:ibc3} inductively, the expectations can be upper bounded as follows for $1\leq j \leq \ell$:
    \begin{align}
        \label{eq:exp1}
        \E[\Bb_j(x(j))] &= \E(\E[\Bb_j(x(j))|x(j-1)])\\
        \label{eq:exp2}
        &\leq \alpha_{j-1}\E[\Bb_{j-1}(x(j-1))] \leq \dots \\
        \label{eq:exp3}
        &\leq \E[\Bb_0(x(0))] \prod_{i=0}^{j-1} \alpha_i \leq \gamma \prod_{i=0}^{j-1} \alpha_i.
    \end{align}
    Then, it follows that:
    \begin{align}
        \label{eq:ibcupbnd1}
        \Pro\{\exists 0\leq t < \ell: \Bb_t(x(t)) \geq 1\} \leq \gamma + \gamma \sum_{t=0}^{\ell-2} \prod_{i=0}^{t} \alpha_i.
    \end{align}
    Conditions \eqref{eq:ibc0} and \eqref{eq:ibc4} show that $\Bb_{\ell}$ is a nonnegative supermartingale. By use of Ville's inequality and \eqref{eq:exp1}-\eqref{eq:exp3},
    \begin{align}
        \label{eq:ibcupbnd2}
        \Pro\{\exists t\geq \ell: \Bb_{\ell}(x(t)) \geq 1\} \leq \E[\Bb_{\ell}(x(\ell))] \leq \gamma \prod_{i=0}^{\ell-1}\alpha_i.
    \end{align}
    
    By complementation of the sum of \eqref{eq:ibcupbnd1} and \eqref{eq:ibcupbnd2} in \eqref{eq:ibcproof2}, we get the lower bound in \eqref{eq:ibclowbnd}.
\end{proof}

Note that by setting $\ell = 0$ in Definition \ref{def:ibc}, conditions \eqref{eq:ibc0}, \eqref{eq:ibc1}, \eqref{eq:ibc2} and \eqref{eq:ibc4} reduce to the standard barrier certificate conditions as in Definition \ref{def:bc} while condition \eqref{eq:ibc3} is no longer applicable. 
This is relevant for the implementation as we first start with $\ell = 0$ to find a standard barrier certificate.
We then increment $\ell$ by one only if we fail, and check for satisfaction of conditions \eqref{eq:ibc0}-\eqref{eq:ibc4}.
We repeat the above until we find an IBC or we reach a maximum number $\ell_{max}$. 
Any IBC found for $\ell > 0$ indicates that a standard barrier certificate with the given template could not be found. 
Also observe that once an IBC is found for a given $\ell \in \mathbb{N}$, we guarantee that an IBC can be found for all $j > \ell$. 
Thus, $\ell$ is the minimum integer that forms an IBC for a given fixed template.
However, it is possible that a larger $\ell$ can improve the safety probability.

We now demonstrate that, drawing on Example \ref{ex:ex1}, it is possible to construct an IBC that offers improved guarantees on the safety probability.
\begin{example}[continues=ex1]
We choose a family of cubic polynomials as the template for $\Bb_i(x)$. 
We then search for their coefficients, restricting the index to $i \leq \ell_{\max} = 3$, so that the resulting collection $\{\Bb_i(x)\}$ forms an IBC in the sense of Definition~\ref{def:ibc}. 
To determine these coefficients, we employ TSSOS \cite{wang2021tssos} in Julia to enforce conditions~\eqref{eq:ibc0}–\eqref{eq:ibc4} and solve for a feasible solution. 
The corresponding SOS program is detailed in Section~\ref{ss:ibcsynthesis}. 
In particular, $\gamma$ is treated as a decision variable, and the objective is to maximize the lower bound on the safety probability implied by Theorem~\ref{thm:ibc}.

With $\ell = 1$ and the choice $\alpha_0 = 0.7$, we obtain an IBC given by $\Bb_0(x) = -10.455x^3 + 77.183x^2 - 186.666x + 148.745$ and $\Bb_1(x) = 0.633x^3 - 0.849x^2 + 0.364x$. 
This solution yields $\gamma = 0.506$, which corresponds to a safety probability of at least $0.141$, already outperforming a single degree-six classical barrier certificate. 
Figure~\ref{fig:1d} illustrates the synthesized IBC along with the initial and unsafe sets. 
As required, $\Bb_0(x)$ remains in the interval $[0,\gamma]$ on the initial set and exceeds $1$ on the unsafe set. 
As required, $\Bb_1(x)$ is also greater than $1$ on the unsafe set and remains nonnegative throughout the entire state space.
Extending the IBC to three functions yields $\gamma = 0.333$, which increases the lower bound on the safety probability to $0.27$.

\begin{figure}[!t]
    \centering
    \epsfig{file=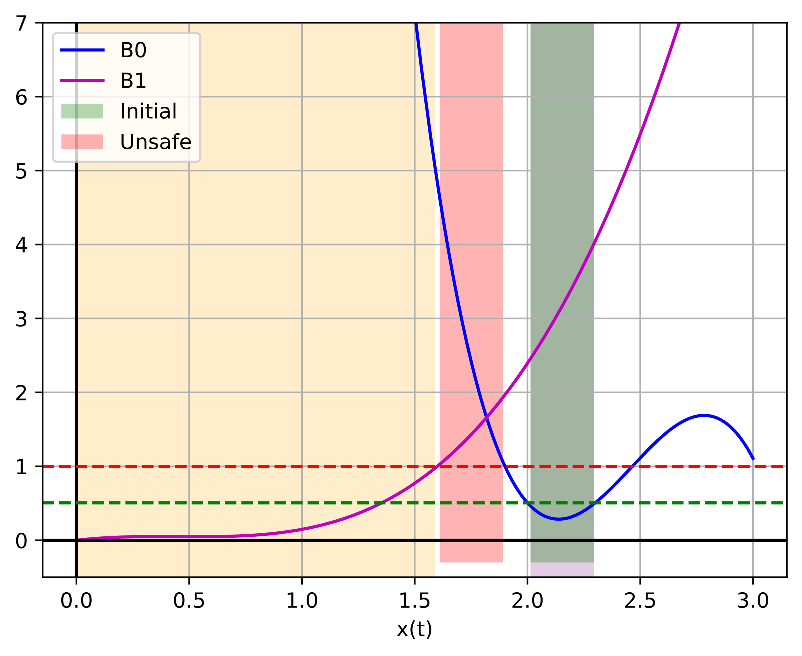, width=0.55\textwidth, keepaspectratio}
    \caption{IBC with $\ell = 1$. The purple (overlapping green) and orange shaded regions represent the sets $\{x: 0\leq \Bb_0(x)\leq \gamma\}$ and $\{x: 0\leq \Bb_1(x)<1\}$, respectively.
    The green and red dashed horizontal lines indicate $\gamma$ and $1$, respectively.}
    \label{fig:1d}
\end{figure}
\end{example}

Although our probabilistic guarantee for Example~\ref{ex:ex1} is weak, we demonstrate the advantages of our proposed conditions through more detailed case studies in Section~\ref{sec:case}.
The limited guarantee in this example arises from the fact that the reachable regions lie very close to the unsafe set, as illustrated in Figure~\ref{fig:1d}.

Our view of interpolation-inspired barrier certificates tackles a similar problem to that of $k$-inductive barrier certificates introduced in \cite{anand2022kstochastic}.
We discuss these similarities and differences in the following subsection.

\subsection{Relaxing $k$-Inductive Barrier Certificates ($k$-BCs)}
\label{ss:relaxk}

To discuss $k$-inductive barrier certificates ($k$-BCs), we first discuss some details on notation.
For a dt-SS $\Sys$ as in Definition \ref{def:system}, the value of the solution process after the $i^{th}$ time step is given by $x(t+i) = f^i(x(t),w_i(t))$, where $w_i(t) = [w(t);\dots;w(t+i-1)]$ is the vector containing all the noise terms from time $t$ to $t+i-1$.

As stated in \cite{anand2022kstochastic}, a function $\Bb: \Xx \rightarrow \R$ is a $k$-BC for dt-SS $\Sys$ for some constants $k\in \N_{\geq 1}, 0\leq \lambda_0 \leq 1$, and $c\geq 0$ if:
\begin{align}
    \label{eq:kold0}
    &\Bb(x)\geq 0 && \forall x \in \Xx,\\
    \label{eq:kold1}
    &\Bb(x)\leq \lambda_0 && \forall x\in \Xx_0,\\
    \label{eq:kold2}
    &\Bb(x)\geq 1 && \forall x\in \Xx_u,\\
    \label{eq:kold3}
    &\E[\Bb(f(x,w))|x] \leq \Bb(x)+c && \forall x\in \Xx \backslash \Xx_u,\\
    \label{eq:kold4}
    &\E[\Bb(f^k(x,w_k))|x] \leq \Bb(x) && \forall x\in \Xx \backslash \Xx_u.
\end{align}

We introduce the following definition of $k$-BCs, which relaxes the conditions compared to the one given above. In particular, the $c$-martingale condition is no longer required and is instead replaced by distinct functions. This formulation parallels that of IBCs, as both rely on multiple functions, thereby enabling simpler templates to serve as certificates. Nevertheless, the two formulations are, in general, not directly comparable. The intuition underlying this certificate construction is presented in Appendix~\ref{appendix:kbc}.

\begin{definition}
    \label{def:kbc}
    Consider a dt-SS $\Sys$ as in Definition \ref{def:system}. A set of functions $\Bb_i: \Xx \rightarrow \R$, $0\leq i <k$, is a $k$-BC for $\Sys$ if there exist constants $k\in \N_{\geq 1}$ and $\lambda_i \in \R_{>0}, 0\leq i <k$, such that:
    \begin{align}
    \label{eq:kbc0}
    & \Bb_i(x) \geq 0 && \forall x \in \Xx,\ 0 \leq i<k,\\
    \label{eq:kbc1}
    &\Bb_0(x) \leq \lambda_0 && \forall x \in \Xx_0,\\
    \label{eq:kbc2}
    &\Bb_i(x) \geq 1 && \forall x \in \Xx_u,\ 0 \leq i<k,\\
    \label{eq:kbc3}
    &\E[\Bb_{i}(f^i(x_0,w_i)) | x_0] \leq \lambda_i
    && \forall x_0 \in \Xx_0,\ 1 \leq i < k,\\
    \label{eq:kbc4}
    &\E[\Bb_i(f^k(x,w_k)) | x] \leq \Bb_{i}(x) && \forall x \in \Xx \backslash \Xx_u,0 \leq i < k.
    \end{align}
\end{definition}

The lower bound on the probability that the dt-SS $\Sys$ is safe can be derived from Definition \ref{def:kbc} using the following result.

\begin{theorem}
    \label{thm:kbc}
    Consider a dt-SS $\Sys$ as in Definition \ref{def:system}. Let there exist a set of functions $\Bb_i: \Xx \rightarrow \mathbb{R}$, $\ 0\leq i<k$, for $\Sys$ such that it is a $k$-BC as in Definition \ref{def:kbc} for some $k\in \N_{\geq 1}$ and $\lambda_i \geq 0, 0\leq i <k$. The probability that the solution process $\mathbf{x}_{x_0}$ starting from an initial state $x_0 \in \Xx_0$ does not reach unsafe set $\Xx_u$ is bounded by
    \begin{align}
        \label{eq:kbclowbnd}
        \Pro\{\mathbf{x}_{x_0}(t) \notin \Xx_u,\ \forall t \in \N\} \geq 1-\sum_{i=0}^{k-1}\lambda_i.
    \end{align}  
\end{theorem}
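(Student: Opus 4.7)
The plan follows a time-slicing argument analogous to the proof of Theorem~\ref{thm:ibc}, but with the decomposition dictated by $k$-periodicity rather than the prefix/tail split. The key observation is that every time step $t \in \N$ admits a unique representation $t = rk + i$ with $r \in \N$ and $0 \le i < k$, so the bad event splits as
\begin{align*}
\{\exists t \in \N: \mathbf{x}_{x_0}(t) \in \Xx_u\} = \bigcup_{i=0}^{k-1} \bigcup_{r \in \N} \{\mathbf{x}_{x_0}(rk+i) \in \Xx_u\}.
\end{align*}
Applying Boole's inequality, it therefore suffices to prove $\Pro\{\exists r \ge 0: \mathbf{x}_{x_0}(rk+i) \in \Xx_u\} \le \lambda_i$ for each $0 \le i < k$ and then sum.

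For a fixed index $i$, I would consider the subsampled process $Y_r^{(i)} := \Bb_i(\mathbf{x}_{x_0}(rk+i))$, $r \in \N$. Condition \eqref{eq:kbc4} asserts that whenever $\mathbf{x}_{x_0}(rk+i) \notin \Xx_u$, we have $\E[Y_{r+1}^{(i)} \mid \mathbf{x}_{x_0}(rk+i)] \le Y_r^{(i)}$. To upgrade this into a bona fide supermartingale amenable to Ville's inequality, I would pass to the stopped process $Z_r^{(i)} := Y_{r \wedge \tau_i}^{(i)}$, where $\tau_i := \inf\{r \ge 0: \mathbf{x}_{x_0}(rk+i) \in \Xx_u\}$. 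Together with the nonnegativity guaranteed by \eqref{eq:kbc0}, this makes $\{Z_r^{(i)}\}$ a nonnegative supermartingale. On the event $\{\tau_i < \infty\}$, condition \eqref{eq:kbc2} forces $Z_{\tau_i}^{(i)} \ge 1$, so
\begin{align*}
\{\exists r \ge 0: \mathbf{x}_{x_0}(rk+i) \in \Xx_u\} \subseteq \{\textstyle\sup_r Z_r^{(i)} \ge 1\}.
\end{align*}

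Ville's inequality applied to $Z_r^{(i)}$ then yields $\Pro\{\sup_r Z_r^{(i)} \ge 1\} \le \E[Z_0^{(i)}] = \E[\Bb_i(\mathbf{x}_{x_0}(i))]$. For $i = 0$ this initial expectation is bounded by $\lambda_0$ directly from \eqref{eq:kbc1}, and for $1 \le i < k$ it is bounded by $\lambda_i$ via condition \eqref{eq:kbc3}, since $\mathbf{x}_{x_0}(i) = f^i(x_0, w_i)$. Summing over $i = 0, \dots, k-1$ and taking complements produces the lower bound \eqref{eq:kbclowbnd}.

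The main subtlety to handle carefully is step two: condition \eqref{eq:kbc4} is only a $k$-step supermartingale inequality \emph{outside} $\Xx_u$, so the raw process $Y_r^{(i)}$ is not globally a supermartingale and Ville's inequality cannot be invoked directly. The stopping argument is what reconciles this, and it is sound precisely because we only need to control the first entry of the subsampled trajectory into $\Xx_u$; once entered, the event we want to bound has already occurred. Beyond this, the remaining ingredients — Boole's inequality, Ville's inequality, and the one-step versus multi-step expectation bounds — are entirely parallel to the IBC proof and should go through routinely.
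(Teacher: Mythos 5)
Your proof is correct and follows essentially the same route as the paper's: decompose the bad event by residue classes modulo $k$, apply Boole's and Ville's inequalities to each subsampled process $\Bb_i(\mathbf{x}_{x_0}(rk+i))$, and bound the initial expectations by $\lambda_0$ via \eqref{eq:kbc1} and by $\lambda_i$ via \eqref{eq:kbc3}. Your explicit treatment of the stopped process to justify applying Ville's inequality (since \eqref{eq:kbc4} only holds outside $\Xx_u$) is a detail the paper's proof glosses over, and it is handled correctly.
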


To obtain a meaningful probability, the sum of the $\lambda_i$ (and thus each individual $\lambda_i$) must be less than $1$. 
We can treat the $\lambda_i$ as decision variables and attempt to reduce them as much as possible. 
Note that doing so also tends to improve the achievable lower bound on the safety probability.

\begin{proof}
    Consider $k$ systems sampled after every $k$ steps, each starting from initial conditions $x(0), \dots, x(k - 1)$, respectively. The dynamic's equations are given as follows:
    \begin{align*}
        x(t+k) &= f^k(x(t), w_{k}(t)), \\
        x(t+k+1) &= f^k(x(t+1), w_{k}(t+1)), \\
        &\vdots\\
        x(t+2k-1) &= f^k(x(t+k-1), w_{k}(t+k-1)).
    \end{align*}
    Condition \eqref{eq:kbc4} implies that the $\Bb_{i}$ satisfy the supermartingale condition for each of these systems. Via  Boole's inequality and Ville's inequality, we get
    \begin{align}
         \Pro\{ \exists t& = i+jk, j\in \N, 0\leq i<k:\Bb_{i}(x(t))\geq1\} 
         \nonumber\\
         &\leq \sum_{i=0}^{k-1} \Pro\{\exists t = jk, j\in \N: \Bb_{i}(x(t+i)) \geq 1\}\\
         &\leq \E[B_{0}(x(0))] + \sum_{i=1}^{k-1} \E[B_{i}(x(i))].
    \end{align}
    Following conditions \eqref{eq:kbc1} and \eqref{eq:kbc3}, and using law of total expectation for each term on the right hand side of the inequality above, we have
    \begin{align*}
         \Pro\{\exists t\in N: \mathbf{x}_{x_0}(t) \in \Xx_u\}
         &\leq \lambda_0 +\sum_{i=1}^{k-1} \E(\E[B_{i}(f^i(x_0,w_i))|x_0])\\
         &\leq \lambda_0 +\sum_{i=1}^{k-1} \E(\lambda_i) = \sum_{i=0}^{k-1} \lambda_i.
    \end{align*}
    By complementation, we get the lower bound in \eqref{eq:kbclowbnd}.
\end{proof}

Note that a $k$-BC satisfying conditions \eqref{eq:kold0}-\eqref{eq:kold4} also satisfies conditions \eqref{eq:kbc0}-\eqref{eq:kbc4} by choosing $\Bb_i = \Bb$ and $\lambda_i = \lambda_0 + ic$. The lower bound on probability of safety via this choice is the same as the one given in \cite{anand2022kstochastic} based on the following simplification:
\begin{align*}
    \sum_{i=0}^{k-1} \lambda_i = \sum_{i=0}^{k-1} (\lambda_0 + ic) = k\lambda_0 + \frac{k(k-1)c}{2}.
\end{align*}
The relaxation of Definition \ref{def:kbc} follows from condition \eqref{eq:kbc3}, where the conditional is only over an initial state $x_0 \in \Xx_0$ while condition \eqref{eq:kold3} requires the inequality to hold for any given state $x \in \Xx\backslash \Xx_u$.

\subsection{Combining Interpolation and $k$-Induction}
\label{ss:combine}

We observe that although both $k$-BCs and IBCs offer comparable advantages, they are, in general, not directly comparable.
IBCs still impose a supermartingale requirement at every step, but they are defined using a collection of related functions that may not themselves be supermartingales.
In contrast, $k$-BCs require a supermartingale only at every $k^{\text{th}}$ step, while constraining the expected values by constants $\lambda_i$ for all $0 \leq i < k$.
Consequently, for a given template, it may happen that no $k$-BC is discovered, whereas an IBC is successfully obtained.
Indeed, for the system in Example \ref{ex:ex1}, we were unable to synthesize cubic $k$-BCs using SOS for $k \leq 4$.
This indicates that IBCs may be more suitable for certain systems.
Nevertheless, because both types of barrier certificates have been shown to admit simpler templates and to improve lower bounds on safety probabilities, one can systematically combine them into a unified approach.

As mentioned earlier, the function $\Bb_{\ell}$ from Definition \ref{def:ibc} is a nonnegative supermartingale for every time step starting at $\ell$. This condition could be restrictive and make finding suitable barrier certificates challenging. As discussed in \cite{anand2022kstochastic}, the supermartingale requirement at each time step for probabilistic safety guarantees could be relaxed for bounded-time horizon using $c$-martingale and combined with $k$-induction for unbounded time guarantees. 
Motivated by this relaxation, we combine the IBC formulation from Definition \ref{def:ibc} with the principle of $k$-induction to formulate a notion of what we call $k$-inductive interpolation-inspired barrier certificate ($k$-IBC).

We should note that there are many possible ways of  formulating $k$-IBCs. 
Let $\ell$ denote the number of functions considered for interpolation and $k$ be the bound on $k$-induction.
Then, the number of ways of combining them reduces to the number of ways of uniquely finding a supermartingale argument.
For $\ell$ number of functions in an IBC, we can apply $k$-induction to $m$ of these functions in ${\ell \choose m}$ ways, where $1\leq m \leq \ell$. Then for each of these $m$ options, we can select the last function to use for interpolation. This gives us a total of $\mathcal{O}\left(\sum_{m=1}^{\ell} {\ell \choose m} m\right)$ ways of combining interpolation and $k$-induction without considering potentially redundant formulations. 
However, not all of them may provide the same benefit, and further analysis is required to determine if a certain combination will lead to better probabilistic guarantees or simpler templates than others.
We now propose two notions of $k$-IBC.
 
The first notion of $k$-IBC derived from Definition \ref{def:ibc} and conditions \eqref{eq:kold3} and \eqref{eq:kold4} is defined as follows.
\begin{definition}[$k$-IBC v1]
    \label{def:kibc1}
    Consider a dt-SS $\Sys$ as in Definition \ref{def:system}. A set of functions $\Bb_i: \Xx \rightarrow \R, 0\leq i\leq \ell$, is a $k$-IBC for $\Sys$ if there exist constants $0\leq \gamma \leq 1$, $\ell \in \N$, $k \in \N_{\geq 1}$, $c \in \R_{\geq 0}$, $\alpha_i \in \R_{> 0}$, $0\leq i < \ell$, such that:
    \begin{align}
        \label{eq:kibc1_0}
        & \Bb_i(x) \geq 0 && \forall x \in \Xx,\ 0 \leq i\leq \ell,\\
        \label{eq:kibc1_1}
        &\Bb_0(x) \leq \gamma && \forall x \in \Xx_0,\\
        \label{eq:kibc1_2}
        &\Bb_i(x) \geq 1 && \forall x \in \Xx_u,\ 0 \leq i\leq \ell,\\
        \label{eq:kibc1_3}
        &\E[\Bb_{i+1}(f(x,w)) | x] \leq \alpha_i \Bb_i(x)
        && \forall x \in \Xx \backslash \Xx_u,\ 0 \leq i < \ell,\\
        \label{eq:kibc1_4}
        &\E[\Bb_{\ell}(f(x,w)) | x] \leq \Bb_{\ell}(x) + c && \forall x \in \Xx \backslash \Xx_u, \\
        \label{eq:kibc1_5}
        &\E[\Bb_{\ell}(f^k(x,w_k)) | x] \leq \Bb_{\ell}(x) && \forall x \in \Xx \backslash \Xx_u.
    \end{align}
\end{definition}

Note that condition \eqref{eq:kibc1_4} requires $\Bb_{\ell}$ to be a $c$-martingale at each time step and condition \eqref{eq:kibc1_5} requires $\Bb_{\ell}$ sampled after every $k^{th}$ step to be a supermartingale. Note that $c = 0$ gives us back IBC.
We now present the probabilistic bound of safety for $k$-IBC v1.
\begin{theorem}[$k$-IBC v1 implies safety]
    \label{thm:kibc1} 
    Consider a dt-SS $\Sys$ as in Definition \ref{def:system}. Let there exist a set of functions $\Bb_i: \Xx \rightarrow \mathbb{R}$, $\ 0\leq i\leq \ell$, for $\Sys$ such that it is a $k$-IBC as in Definition \ref{def:kibc1} for some $0\leq \gamma \leq 1$, $\ell \in \N$, $k \in \N_{\geq 1}$, $c \in \R_{\geq 0}$, $\alpha_i \in \R_{>0}$, $\ 0\leq i < \ell$. The probability that the solution process $\mathbf{x}_{x_0}$ starting from an initial state $x_0 \in \Xx_0$ does not reach unsafe set $\Xx_u$ is lower bounded by 
    \begin{align}
        \label{eq:kibc1lowbnd}
        \Pro\{\mathbf{x}_{x_0}(t) \notin \Xx_u,\ \forall t \in \N\} \geq 1- & \gamma \left(1 + k\prod_{i=0}^{\ell-1}\alpha_i + \sum_{t=0}^{\ell-2}\prod_{i = 0}^{t}\alpha_i \right) \nonumber\\
        &- \frac{k(k-1)c}{2}.
    \end{align}   
\end{theorem}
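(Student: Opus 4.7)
The plan is to combine the two arguments already used in the proofs of Theorem~\ref{thm:ibc} and Theorem~\ref{thm:kbc}. The key observation is that conditions \eqref{eq:kibc1_0}--\eqref{eq:kibc1_3} mirror those of the IBC, while conditions \eqref{eq:kibc1_4}--\eqref{eq:kibc1_5} make $\Bb_{\ell}$ a $c$-martingale at every step and a supermartingale sampled every $k$ steps. So I will split the event that the trajectory visits $\Xx_u$ into the bounded-horizon part $\{\exists\, 0\leq t<\ell: x(t)\in \Xx_u\}$ and the tail $\{\exists\, t\geq \ell: x(t)\in \Xx_u\}$, bound the first using the IBC argument, and bound the second using the $k$-inductive argument applied to $\Bb_{\ell}$.

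For the bounded-horizon part, I would apply condition \eqref{eq:kibc1_2} together with Boole's and Markov's inequalities exactly as in \eqref{eq:ibcproof3}, and then drive down the expectations $\E[\Bb_i(x(i))]$ inductively using \eqref{eq:kibc1_3} and the law of total expectation (as in \eqref{eq:exp1}--\eqref{eq:exp3}) to obtain
\begin{align*}
\Pro\{\exists\, 0\leq t<\ell: \Bb_t(x(t))\geq 1\} \;\leq\; \gamma + \gamma \sum_{t=0}^{\ell-2}\prod_{i=0}^{t}\alpha_i.
\end{align*}

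For the tail, I would mimic the proof of Theorem~\ref{thm:kbc}: decompose the process $\{x(t)\}_{t\geq \ell}$ into $k$ subprocesses sampled every $k$ steps, starting respectively from $x(\ell), x(\ell+1), \dots, x(\ell+k-1)$. Condition \eqref{eq:kibc1_5} ensures that $\Bb_{\ell}$ is a nonnegative supermartingale along each such subprocess, so Boole's inequality together with Ville's inequality yields
\begin{align*}
\Pro\{\exists\, t\geq \ell: \Bb_{\ell}(x(t))\geq 1\} \;\leq\; \sum_{i=0}^{k-1} \E[\Bb_{\ell}(x(\ell+i))].
\end{align*}
To bound each $\E[\Bb_{\ell}(x(\ell+i))]$, I would first apply \eqref{eq:kibc1_3} iteratively (as in \eqref{eq:exp1}--\eqref{eq:exp3}) to obtain $\E[\Bb_{\ell}(x(\ell))]\leq \gamma \prod_{j=0}^{\ell-1}\alpha_j$, and then use the $c$-martingale condition \eqref{eq:kibc1_4} together with the law of total expectation to propagate this bound forward by $i$ steps, giving $\E[\Bb_{\ell}(x(\ell+i))]\leq \gamma \prod_{j=0}^{\ell-1}\alpha_j + ic$. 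Summing over $i=0,\dots,k-1$ produces $k\gamma \prod_{j=0}^{\ell-1}\alpha_j + \frac{k(k-1)c}{2}$.

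Adding the two upper bounds and complementing yields exactly \eqref{eq:kibc1lowbnd}. The main technical subtlety is the interplay between the interpolation chain (which controls $\E[\Bb_{\ell}(x(\ell))]$) and the $k$-inductive tail argument (which needs expectations at times $\ell, \ell+1, \dots, \ell+k-1$); getting the constants right requires invoking \eqref{eq:kibc1_3} through index $\ell-1$ and then $c$-drifting with \eqref{eq:kibc1_4} for up to $k-1$ additional steps before the supermartingale property \eqref{eq:kibc1_5} takes over. Everything else is a direct adaptation of the IBC and $k$-BC proofs already in the paper.
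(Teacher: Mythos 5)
Your proposal is correct and follows essentially the same route as the paper's own proof: the same split of the unsafe event at time $\ell$, the same Boole/Markov bound with the interpolation chain \eqref{eq:kibc1_3} for the bounded-horizon part, and the same $k$ sampled subprocesses with Ville's inequality plus the $c$-martingale drift \eqref{eq:kibc1_4} to bound $\E[\Bb_{\ell}(x(\ell+i))]\leq \gamma\prod_{j=0}^{\ell-1}\alpha_j + ic$ for the tail. Nothing is missing; the constants and the final summation match the paper exactly.
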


\begin{proof}
    \label{proof:kibc1}
    Expressions \eqref{eq:ibcproof1} and \eqref{eq:ibcproof2} still hold and the proof for less than $\ell$ time steps holds per \eqref{eq:ibcupbnd1}. For more than or equal to $\ell$ time steps, consider $k$ systems sampled after every $k$ steps, each starting from initial conditions $x(\ell), \dots, x(\ell + k - 1)$, respectively. The dynamic's equations are given as follows:
    \begin{align}
        \label{eq:dyn1}
        x(t+\ell+k) &= f^k(x(t+\ell), w_{k}(t+\ell)), \\
        x(t+\ell+k+1) &= f^k(x(t+\ell+1), w_{k}(t+\ell+1)), \\
        &\vdots \nonumber\\
        \label{eq:dynk}
        x(t+\ell+2k-1) &= f^k(x(t+\ell+k-1), w_{k}(t+\ell+k-1)).
    \end{align}
    Condition \eqref{eq:kibc1_5} implies that $\Bb_{\ell}$ satisfies the supermartingale condition for each of these systems. Via  Boole's inequality and Ville's inequality, we get
    \begin{align}
         \Pro\{&\exists t\geq \ell:\ \Bb_{\ell}(x(t)) \geq 1\} \nonumber\\ &\leq \sum_{i=0}^{k-1} \Pro\{\exists t = j\ell, j\in \N: \Bb_{\ell}(x(t+\ell+i)) \geq 1\}\\
         &\leq \sum_{i=0}^{k-1} \E[B_{\ell}(x(\ell+i))].
         \label{eq:kpbnd1}
    \end{align}
    Using the law of total expectation, condition \eqref{eq:kibc1_4} and expressions \eqref{eq:exp1}-\eqref{eq:exp3}, the expectations can be bounded as follows:\\ \vspace{-1.5em}
    \begin{align}
        \label{eq:kexp1}
        \E[B_{\ell}(x(\ell+i))] &= \E(\E[\Bb_{\ell}(x(\ell+i))|x(\ell+i-1)])\\
        \label{eq:kexp2}
        &\leq \E[\Bb_{\ell}(x(\ell+i-1))]+c \leq \dots \\
        \label{eq:kexp3}
        &\leq \E[\Bb_{\ell}(x(\ell))]  + ic \leq \gamma \prod_{j=0}^{\ell-1} \alpha_j + ic.
    \end{align}
    Then, it follows that:
    \begin{align}
        \Pro\{\exists t\geq \ell:\Bb_{\ell}(x(t)) \geq 1\} &\leq \sum_{i=0}^{k-1} \left(\gamma \prod_{j=0}^{\ell-1} \alpha_j + ic \right)\\
        &\leq k\gamma \prod_{i=0}^{\ell-1} \alpha_i + \frac{k(k-1)c}{2}.
        \label{eq:kpbnd2}
    \end{align}
    By complementation of the sum of \eqref{eq:ibcupbnd1} and \eqref{eq:kpbnd2} in \eqref{eq:ibcproof2}, we get the lower bound in \eqref{eq:kibc1lowbnd}.
\end{proof}

We next show, using Example \ref{ex:ex1}, that $k$-IBCs can further tighten the probabilistic lower bound on safety.
\begin{example}[continues=ex1]
\begin{figure}[!t]
    \centering
    \epsfig{file=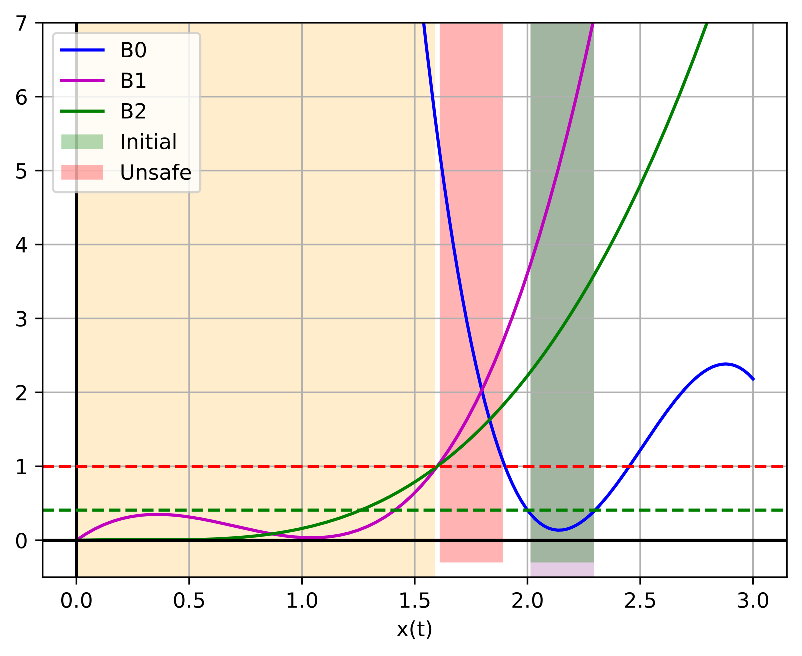, width=0.55\textwidth, keepaspectratio} 
    \caption{$k$-IBC v1 with $\ell = 2$, $k = 2$. The purple (overlapping green) and orange shaded regions represent the sets $\{x: 0\leq \Bb_0(x)\leq \gamma\}$ and $\bigcup_{i=1}^{2}\{x: 0\leq \Bb_i(x) < 1\}$, respectively. 
    The green and red dashed horizontal lines indicate $\gamma$ and $1$, respectively.}
    \label{fig:1d_kibc} 
\end{figure}
We take a family of cubic functions as the template for $\Bb_i(x)$. 
We then search for the corresponding coefficients, imposing $\ell_{max} = 3$ and $k_{max} = 3$, so that the resulting set of functions $\Bb_i(x)$ forms a $k$-IBC v1 in the sense of Definition~\ref{def:kibc1}. 
To determine these coefficients, we solve constraints~\eqref{eq:kibc1_0}-\eqref{eq:kibc1_5} using TSSOS~\cite{wang2021tssos} in Julia; the SOS formulation used is given in Section~\ref{ss:kibcsynthesis}. 
In particular, $\gamma$ and $c$ are treated as optimization variables, and the objective is to maximize the lower bound on the safety probability.

For $\ell = 2$, $k = 2$, and a uniform choice $\alpha_i = 0.3$ for all $0 \leq i < \ell$, we obtain a $k$-IBC v1 of the form
$\Bb_0(x) = -11.149x^3 + 83.922x^2 - 206.006x + 165.923$, 
$\Bb_1(x) = 1.969x^3 - 4.141x^2 + 2.209x$, and 
$\Bb_2(x) = 0.442x^3 - 0.375x^2 + 0.092x$. 
The resulting parameters are $\gamma = 0.408$ and $c = 0.00156$, which implies a safety probability of at least $0.395$.
This already improves on the lower bound obtained from a standard degree-ten barrier certificate.
Figure~\ref{fig:1d_kibc} illustrates the synthesized $k$-IBC v1 together with the initial and unsafe sets.
On the initial states, $\Bb_0(x)$ remains between $0$ and $\gamma$, while on the unsafe states, $\Bb_0(x)$ exceeds $1$. 
As required, both $\Bb_1(x)$ and $\Bb_2(x)$ are greater than $1$ on the unsafe set and remain nonnegative throughout the entire state space.
For $\ell = 3$, $k = 3$, and the same choice $\alpha_i = 0.3$, we obtain $\gamma = 0.342$ and $c = 0.00051$, which increases the lower bound on the safety probability to $0.495$.
\end{example}

Although the probabilistic safety guarantee for Example~\ref{ex:ex1} remains relatively modest, it is notably higher than the bound achieved using only an IBC, due to the additional strength provided by $k$-induction.

The second instance of $k$-IBC derived from Definitions \ref{def:ibc} and \ref{def:kbc} is defined as follows.
\begin{definition}[$k$-IBC v2]
    \label{def:kibc2}
    Consider a dt-SS $\Sys$ as in Definition \ref{def:system}. A set of functions $\Bb_i: \Xx \rightarrow \R$, for all $0\leq i< \ell+k$, is a $k$-IBC for $\Sys$ if there exist constants $0\leq \gamma \leq 1$, $\ell \in \N$, $k \in \N_{\geq 1}$, $\alpha_i \in \R_{> 0}$, $0\leq i < \ell$ and $\beta_j \in \R_{> 0}$, $1\leq j < k$, such that:
    \begin{align}
        \label{eq:kibc2_0}
        & \Bb_i(x) \geq 0 
        \hspace{5.8em} \forall x \in \Xx,\ 0 \leq i< \ell+k,\\
        \label{eq:kibc2_1}
        &\Bb_0(x) \leq \gamma 
        \hspace{5.7em} \forall x \in \Xx_0,\\
        \label{eq:kibc2_2}
        &\Bb_i(x) \geq 1 
        \hspace{5.8em} \forall x \in \Xx_u,\ 0 \leq i< \ell+k,\\
        \label{eq:kibc2_3}
        &\E[\Bb_{i+1}(f(x,w)) | x] \leq \alpha_i \Bb_i(x)\nonumber\\
        & \hspace{10em} \forall x \in \Xx \backslash \Xx_u,\ 0 \leq i < \ell,\\
        \label{eq:kibc2_4a}
        &\E[\Bb_{\ell+j}(f^{2j+1}(x,w_{2j+1})) | x] \leq \beta_j\Bb_{\ell-j-1}(x) \nonumber\\
        & \hspace{10em} \forall x \in \Xx \backslash \Xx_u, 1\leq j<\ell,\\
        \label{eq:kibc2_4b}
        &\E[\Bb_{\ell+j}(f^{\ell+j}(x,w_{\ell+j})) | x] \leq \beta_j\Bb_{0}(x) \nonumber\\
        & \hspace{10em} \forall x \in \Xx \backslash \Xx_u, \ell \leq j<k,\\
        \label{eq:kibc2_5}
        &\E[\Bb_{\ell+j}(f^{k}(x,w_{k})) | x] \leq \Bb_{\ell+j}(x)\nonumber\\
        & \hspace{10em} \forall x \in \Xx \backslash \Xx_u, 0\leq j<k.
    \end{align}
\end{definition}

Observe that in the above formulation, from Definition \ref{def:kbc}, 
$\lambda_0 = \alpha_{\ell-1}\Bb_{\ell-1}(x)$,
$\lambda_j = \beta_{j}\Bb_{\ell-j-1}(x)$ for $1\leq j< \ell$ or $\lambda_j = \beta_{j}\Bb_{0}(x)$ for $\ell \leq j< k$. Additionally, condition~\eqref{eq:kibc2_4b} is not applicable if $k < \ell$ and condition~\eqref{eq:kibc2_4a} will only apply for $1\leq j < k$. When $\ell$ steps are reached through interpolation, $k$-induction is utilized. The functions derived from interpolation are then employed to bound the functions involved in the $k$-induction process. Specifically, for this scenario, the expected value of $\Bb_{\ell + j}$ at the $(\ell + j)^{th}$ step is bounded by the value of $\Bb_{\ell-j-1}$ as stated in condition~\eqref{eq:kibc2_4a}. This results in a step difference of $\ell + j-(\ell-j-1) = 2j+1$ to transition from the state overestimated by $\Bb_{\ell-j-1}$ to that of $\Bb_{\ell + j}$, justifying the use of $2j+1$ in condition~\eqref{eq:kibc2_4a}. An analogous rationale explains the adoption of $\ell+j$ in condition~\eqref{eq:kibc2_4b}.

We now present the usefulness of $k$-IBC v2.
\begin{theorem}[$k$-IBC v2 implies safety]
    \label{thm:kibc2}
    Consider a dt-SS $\Sys$ as in Definition \ref{def:system}. Let there exist a set of functions $\Bb_i: \Xx \rightarrow \mathbb{R}$, $\ 0\leq i< \ell+k$, for $\Sys$ such that it is a $k$-IBC as in Definition \ref{def:kibc2} for some $0\leq \gamma \leq 1$, $\ell \in \N$, $k \in \N_{\geq 1}$, $\alpha_i \in \R_{> 0}$, $0\leq i < \ell$, and $\beta_j \in \R_{> 0}$, $1\leq j < k$. The probability that the solution process $\mathbf{x}_{x_0}$ starting from an initial state $x_0 \in \Xx_0$ does not reach unsafe set $\Xx_u$ is bounded by
    \begin{align}
        \Pro\{\mathbf{x}_{x_0}(t) \notin \Xx_u, \forall t \in \N\} \geq
        & 1 - \gamma \Bigg(1
        + \prod_{i=0}^{\ell-1} \alpha_i
        + \sum_{t=0}^{\ell-2} \prod_{i=0}^{t} \alpha_i \nonumber \\
        &+ \sum_{j=1}^{\ell-1} \Big(\beta_j  \prod_{i=0}^{\ell-j-1} \alpha_i \Big) 
        + \sum_{j=\ell}^{k-1} \beta_j \Bigg).
        \label{eq:kibc2lowbnd}
    \end{align}
\end{theorem}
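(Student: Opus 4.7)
The plan is to follow the same two-phase template used for Theorem \ref{thm:ibc} and Theorem \ref{thm:kibc1}: first split the event $\{\exists t : \mathbf{x}_{x_0}(t)\in \Xx_u\}$ into what happens within the first $\ell$ steps versus what happens afterwards, bound each piece separately, and complement at the end. Since $\Xx_u \subseteq \{x: \Bb_i(x) \geq 1\}$ for every $0 \leq i < \ell+k$ by \eqref{eq:kibc2_2}, I can use different certificates on different portions of the trajectory, matching the index $i$ to the time step.

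For the bounded phase $0 \le t < \ell$, I would use $\Bb_t$ at step $t$: Boole's inequality followed by Markov's inequality gives a bound by $\sum_{t=0}^{\ell-1}\E[\Bb_t(x(t))]$, and condition \eqref{eq:kibc2_3} together with the tower property yields the familiar IBC telescoping bound $\E[\Bb_m(x(m))] \leq \gamma \prod_{i=0}^{m-1}\alpha_i$ for each $0 \le m \le \ell$. Summing reproduces exactly the contribution $\gamma\bigl(1 + \sum_{t=0}^{\ell-2}\prod_{i=0}^{t}\alpha_i\bigr)$ in \eqref{eq:kibc2lowbnd}; this is essentially a verbatim replay of the argument leading to \eqref{eq:ibcupbnd1}.

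For the unbounded phase $t \geq \ell$, I would mimic the $k$-induction split in the proofs of Theorem \ref{thm:kbc} and Theorem \ref{thm:kibc1}. Decomposing the trajectory from time $\ell$ onward into $k$ subsampled chains, each starting at $x(\ell + j)$ for $0 \le j < k$ and advancing by $k$ steps, condition \eqref{eq:kibc2_5} ensures that $\Bb_{\ell+j}$ is a nonnegative supermartingale on the $j$-th chain. Ville's inequality then bounds the probability that any such chain ever crosses $1$ by $\E[\Bb_{\ell+j}(x(\ell+j))]$. It remains to bound this initial expectation by pushing through the appropriate interpolation link: for $j=0$, use the IBC bound to get $\gamma\prod_{i=0}^{\ell-1}\alpha_i$; for $1 \le j < \ell$, apply \eqref{eq:kibc2_4a} with $x = x(\ell - j - 1)$ (noting that $2j+1$ steps later the state is $x(\ell+j)$), then take expectations and invoke the same telescoping IBC bound on $\E[\Bb_{\ell-j-1}(x(\ell-j-1))]$, yielding a $\beta_j \gamma \prod_{i=0}^{\ell-j-2}\alpha_i$ term; for $\ell \le j < k$, apply \eqref{eq:kibc2_4b} with $x = x_0$ and \eqref{eq:kibc2_1} to obtain $\beta_j \gamma$.

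Finally, I would add the bounded and unbounded bounds using Boole's inequality, factor out $\gamma$, and take the complement to obtain \eqref{eq:kibc2lowbnd}. I expect the main obstacle is purely bookkeeping: matching each summand in the stated lower bound to the correct pair $(j, \ell-j-1)$ of function indices and verifying that the time offset $2j+1$ in \eqref{eq:kibc2_4a} is consistent for every $j$ in its range, and similarly that $\ell+j$ in \eqref{eq:kibc2_4b} is consistent when $j \geq \ell$. Once the index arithmetic is laid out correctly, every individual inequality in the proof is an application of a tool already deployed in the proofs of Theorems \ref{thm:ibc}, \ref{thm:kbc}, and \ref{thm:kibc1}: the tower property, Markov's inequality, Boole's inequality, and Ville's inequality, combined with the IBC telescoping identity derived in \eqref{eq:exp1}--\eqref{eq:exp3}.
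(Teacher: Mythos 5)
Your proposal follows essentially the same route as the paper's proof: the same split of the unsafe event at time $\ell$, the same replay of \eqref{eq:ibcupbnd1} for the bounded phase, the same decomposition of the tail into $k$ chains subsampled every $k$ steps with $\Bb_{\ell+j}$ acting as a nonnegative supermartingale on the $j$-th chain via \eqref{eq:kibc2_5}, and the same use of \eqref{eq:kibc2_4a}, \eqref{eq:kibc2_4b}, the tower property, and Ville's and Boole's inequalities to bound the initial expectations $\E[\Bb_{\ell+j}(x(\ell+j))]$. Your check that the time offsets $2j+1$ and $\ell+j$ land the conditioning state at $x(\ell-j-1)$ and $x(0)$, respectively, is exactly the bookkeeping the paper performs.

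The one point to reconcile is the middle sum. For $1\le j<\ell$ you obtain $\beta_j\gamma\prod_{i=0}^{\ell-j-2}\alpha_i$, which is indeed what the telescoping bound \eqref{eq:exp1}--\eqref{eq:exp3} delivers when applied to $\E[\Bb_{\ell-j-1}(x(\ell-j-1))]$; the theorem statement (and the paper's own display \eqref{eq:kexp2_bnd1}) instead writes $\prod_{i=0}^{\ell-j-1}\alpha_i$, carrying one extra factor $\alpha_{\ell-j-1}$. These differ by a positive constant, so when some $\alpha_i<1$ the stated bound is strictly tighter than what the argument supports. Your derivation therefore does not literally reproduce \eqref{eq:kibc2lowbnd} as printed; it proves the version with upper index $\ell-j-2$, which appears to be the bound the argument actually justifies. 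Other than this index discrepancy---which you correctly anticipated as the main bookkeeping hazard---the proof is complete and matches the paper's.
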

\begin{proof}
    Once again, equation \eqref{eq:ibcproof1} still holds and the proof for less than $\ell$ time steps holds per \eqref{eq:ibcupbnd1}. For more than or equal to $\ell$ time steps, consider $k$ systems given in equations \eqref{eq:dyn1}-\eqref{eq:dynk}.
    Condition \eqref{eq:kibc2_5} implies that each $\Bb_{\ell+j}$ satisfies the supermartingale condition for each of these systems. Via  Boole's inequality and Ville's inequality, we get
    \begin{align}
         \Pro\{&\exists t = i\ell + j, i\in \N_{\geq 1}, 0\leq j<k: \Bb_{\ell+j}(x(t)) \geq 1\} \nonumber\\ &\leq \sum_{j=0}^{k-1} \Pro\{\exists t = i\ell, i\in \N: \Bb_{\ell+j}(x(t+\ell+j)) \geq 1\}\\
         \label{eq:kp2_bnd1}
         &\leq \E[B_{\ell}(x(\ell))] + \sum_{j=1}^{k-1} \E[B_{\ell+j}(x(\ell+j))].
    \end{align}
    The first term from the right hand side of the above inequality can be upper bounded using inequality \eqref{eq:exp3}. Each term of the summation can be upper bounded using conditions \eqref{eq:kibc2_4a}, \eqref{eq:kibc2_4b},  inequality \eqref{eq:exp3} and law of total expectation as follows.
    For $1\leq j<\ell$,
    \begin{align}
        \label{eq:kexp2_bnd1}
        \E[&B_{\ell+j}(x(\ell+j))] \nonumber\\
        &= \E(\E[\Bb_{\ell+j}(f^{2j+1}(x(\ell-j-1), w_{2j+1})|x(\ell-j-1)]) \\
        &\leq \beta_j \E[B_{\ell-j-1}(x(\ell-j-1))] \leq \gamma \beta_j  \prod_{i=0}^{\ell-j-1} \alpha_i.
    \end{align}
    For $\ell\leq j<k$,
    \begin{align}
        \label{eq:kexp2_bnd2}
        \E[B_{\ell+j}(x(\ell+j))] &= \E(\E[\Bb_{\ell+j}(f^{\ell+j}(x(0), w_{\ell+j})|x(0)]) \\
        &\leq \beta_j \E[B_{0}(x(0))] \leq \gamma \beta_j.
    \end{align}
    Then, 
    \begin{align}
        \label{eq:kp2_bnd2}
        \sum_{j=0}^{k-1} \Pro\{&\exists t\geq \ell: \Bb_{\ell+j}(x(t)) \geq 1\} \nonumber\\ 
        &\leq \gamma \prod_{i=0}^{\ell-1} \alpha_i 
        + \gamma \sum_{j=1}^{\ell-1} \left(\beta_j  \prod_{i=0}^{\ell-j-1} \alpha_i \right)
        + \gamma \sum_{j=\ell}^{k-1} \beta_j.
    \end{align}
    By complementation of the sum of \eqref{eq:ibcupbnd1} and \eqref{eq:kp2_bnd2} in \eqref{eq:ibcproof1}, we get the lower bound in \eqref{eq:kibc2lowbnd}.
\end{proof}

\begin{remark}
Note that for a $k$-IBC, a choice of $\ell = 0$, $k = 1$ boils down to a standard barrier certificate, a choice of $k = 1$ boils down to an IBC and a choice of $\ell = 0$ boils down to a $k$-BC.
\end{remark}

\section{Synthesizing IBC and $k$-IBC}
\label{sec:synthesis}

Here, we provide computational methods for synthesizing IBCs and $k$-IBCs based on Definitions \ref{def:ibc} and \ref{def:kibc1}, respectively. 
To do so, we first note that a set $V \subseteq \R^n$ is semi-algebraic if it can be defined with a vector of polynomial inequalities of $h(x)$ as $V = \{ x\in \R^n : h(x) \geq 0 \}$, where the inequalities are element-wise.

The technique of using semidefinite programming \cite{parrilo2003semidefinite} and framing the search for standard barrier certificates as SOS polynomials \cite{prajna2007framework} is usually simpler, scales relatively better and takes less time when compared to SMT based approaches.
In this section, we provide an SOS formulation as we found it to be the most effective for our case studies.

To do so, we make the following assumption over dt-SS $\Sys$.

\begin{assumption}
    The dt-SS $\Sys$ has a continuous state set $\Xx \subseteq \R^n$, and its transition function $f : \Xx \times \Ww \rightarrow \Xx$ is a polynomial function of the state variable $x$ and noise variable $w$. The sets $\Xx, \Xx_0$ and $\Xx_u$ are semi-algebraic with corresponding vectors of polynomials $g(x), g_0(x)$ and $g_u(x)$, respectively.
    \label{asp:sos}
\end{assumption}

We now show how one may utilize a SOS approach to find IBCs and $k$-IBCs. Note that we put maximizing the safety probability as an objective.

\subsection{IBCs}
\label{ss:ibcsynthesis}

Under Assumption \ref{asp:sos}, the IBC conditions \eqref{eq:ibc0}-\eqref{eq:ibc4} can be formulated as a set of SOS constraints in order to compute a polynomial IBC of a predefined degree per the following lemma.
\begin{lemma}
    \label{lem:sosIBC}
    Consider a dt-SS $\Sys$. Suppose Assumption \ref{asp:sos} holds for $\Sys$. Suppose there exist constants $\ell \in \N$, $\alpha_i \in \R_{>0}$, $0\leq i < \ell$, polynomials of same degree $\Bb_i(x)$ and SOS polynomials $\ \hat{\eta}_i(x), \eta_0(x),\ \eta_{u,i}(x),\ \eta_i(x),\ \hat{\eta}(x)$ of appropriate dimensions. The objective-based SOS optimization problem is given as follows:
    \begin{align}
        \max_{0 \leq \gamma \leq 1}\ & p = 1-\gamma \left(1 + \prod_{i=0}^{\ell-1}\alpha_i + \sum_{t=0}^{\ell-2}\prod_{i = 0}^{t}\alpha_i\right) && \nonumber\\
        \mathsf{s.t.}\ & 0 \leq p \leq 1,&& \\
        &\text{The following are SOS polynomials:} && \nonumber\\
        &\Bb_i(x) - \hat{\eta}_i^T(x) g(x) 
        \hspace{6.5em} \forall~ 0\leq i \leq \ell,\\
        &\gamma - \Bb_0(x) - \eta_0^T(x) g_0(x),\\
        &\Bb_i(x) - 1 - \eta_{u,i}^T(x) g_u(x) 
        \hspace{4em} \forall~ 0\leq i \leq \ell,\\
        &\alpha_i \Bb_{i}(x) - \E[\Bb_{i+1}(f(x,w))|x] - \eta_i^T(x) g(x) \nonumber\\
        & \hspace{14.5em}\forall~ 0\leq i < \ell,\\
        &\Bb_{\ell}(x) - \E[\Bb_{\ell}(f(x,w))|x] - \hat{\eta}^T(x) g(x),
    \end{align}
    where $x$ is the state variable over the set $\Xx$ and $w$ is the noise variable over the set $\Ww$. Then, the set of functions $\Bb_i(x),\ 0\leq i\leq \ell$, is an IBC following Definition \ref{def:ibc}.
\end{lemma}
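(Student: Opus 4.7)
The plan is to verify each of the five conditions in Definition \ref{def:ibc} by applying the standard Positivstellensatz-style argument: any SOS polynomial is nonnegative on $\R^n$, and under Assumption \ref{asp:sos} each of $\Xx$, $\Xx_0$, $\Xx_u$ is the semi-algebraic set defined by $g, g_0, g_u$ respectively, so if a polynomial $p(x)$ can be written as (SOS) $+\, \sigma^T(x) h(x)$ with $\sigma(x)$ SOS and $h(x) \geq 0$ componentwise on a set $V$, then $p(x) \geq 0$ on $V$.

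First, for condition \eqref{eq:ibc0}, the SOS polynomial $\Bb_i(x) - \hat{\eta}_i^T(x) g(x)$ is nonnegative everywhere, and on $\Xx$ the term $\hat{\eta}_i^T(x) g(x)$ is nonnegative (componentwise product of a nonnegative vector and an SOS vector), giving $\Bb_i(x) \geq 0$ on $\Xx$. Completely analogous reasoning, using $g_0$ restricted to $\Xx_0$ and $g_u$ restricted to $\Xx_u$, yields \eqref{eq:ibc1} from the SOS constraint involving $\gamma - \Bb_0(x) - \eta_0^T(x) g_0(x)$ and \eqref{eq:ibc2} from the SOS constraint involving $\Bb_i(x) - 1 - \eta_{u,i}^T(x) g_u(x)$.

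For the expectation-based conditions \eqref{eq:ibc3} and \eqref{eq:ibc4}, the key preliminary observation is that under Assumption \ref{asp:sos} the map $x \mapsto \E[\Bb_{i+1}(f(x,w))\mid x]$ is itself a polynomial in $x$: since $\Bb_{i+1}$ is polynomial and $f$ is polynomial in both $x$ and $w$, the composition $\Bb_{i+1}(f(x,w))$ is a polynomial in $x$ whose coefficients are polynomial functions of $w$; taking the expectation in $w$ replaces each monomial in $w$ by its (assumed finite) moment, producing a polynomial in $x$. Then the SOS polynomial $\alpha_i \Bb_i(x) - \E[\Bb_{i+1}(f(x,w))\mid x] - \eta_i^T(x) g(x)$ is nonnegative on $\Xx$, which delivers $\E[\Bb_{i+1}(f(x,w))\mid x] \leq \alpha_i \Bb_i(x)$ for all $x \in \Xx$, a property that is strictly stronger than the required inequality on $\Xx \setminus \Xx_u$ in \eqref{eq:ibc3}. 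The same argument applied to the final SOS constraint yields \eqref{eq:ibc4}. The objective and the constraint $0 \leq p \leq 1$ merely link the synthesized $\gamma$ to the safety probability bound of Theorem \ref{thm:ibc} and are not needed to establish that $\{\Bb_i\}$ is an IBC. There is no substantive obstacle; the only mildly delicate point is the computability of $\E[\Bb_{i+1}(f(x,w))\mid x]$ as a polynomial, which is routine once sufficiently many noise moments are available in closed form.
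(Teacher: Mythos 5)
Your proof is correct and is exactly the standard S-procedure/Positivstellensatz-style argument that the paper leaves implicit (the paper states this lemma without proof, treating the SOS reformulation as routine following \cite{prajna2007framework}): each SOS certificate dominates a term that is nonnegative on the relevant semi-algebraic set, each expectation is a polynomial in $x$ under Assumption \ref{asp:sos} and finiteness of the noise moments, and the constraints on all of $\Xx$ imply those on $\Xx \setminus \Xx_u$. Your observations that the objective plays no role in establishing feasibility of Definition \ref{def:ibc} and that the moment computation is the only delicate point are both accurate.
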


\subsection{$k$-IBCs}
\label{ss:kibcsynthesis}
The SOS formulations for the two instances of $k$-IBCs discussed in Section \ref{ss:combine} are given as follows.

\subsubsection{$k$-IBC v1}
Under Assumption \ref{asp:sos}, the $k$-IBC v1 conditions \eqref{eq:kibc1_0}-\eqref{eq:kibc1_5} can be formulated as a set of SOS constraints per the following lemma.

\begin{lemma}
    \label{lem:soskibc1}
    Consider a dt-SS $\Sys$. Suppose Assumption \ref{asp:sos} holds for $\Sys$. Suppose there exist constants $\ell \in \N$, $k \in \N_{\geq 1}$, $\alpha_i \in \R_{>0}$, $0\leq i < \ell$, polynomials of same degree $\Bb_i(x)$ and SOS polynomials $\hat{\eta}_i(x)$, $\eta_0(x)$, $\eta_{u,i}(x)$, $\eta_i(x)$, $\hat{\eta}(x)$, $\hat{\eta}_k(x)$ of appropriate dimensions.
    The objective-based SOS optimization problem is given as follows:
    \begin{align}
        \max_{\substack{0 \leq \gamma \leq 1 \\ c \geq 0}}\ & p = 1 - \gamma \left(1 + k\prod_{i=0}^{\ell-1}\alpha_i + \sum_{t=0}^{\ell-2}\prod_{i = 0}^{t}\alpha_i \right) - \frac{k(k-1)c}{2} && \nonumber\\
        \mathsf{s.t.}\ & 0 \leq p \leq 1,&& \\
        &\text{The following are SOS polynomials:} && \nonumber\\
        &\Bb_i(x) - \hat{\eta}_i^T(x) g(x) 
        \hspace{6.5em} \forall~0\leq i \leq \ell,\\
        &\gamma - \Bb_0(x) - \eta_0^T(x) g_0(x),\\
        &\Bb_i(x) - 1 - \eta_{u,i}^T(x) g_u(x) 
        \hspace{4em} \forall~0\leq i \leq \ell,\\
        &\alpha_i \Bb_{i}(x) - \E[\Bb_{i+1}(f(x,w))|x] - \eta_i^T(x) g(x)\nonumber\\
        & \hspace{14.5em} \forall~0\leq i < \ell,\\
        &\Bb_{\ell}(x) + c - \E[\Bb_{\ell}(f(x,w))|x] - \hat{\eta}^T(x) g(x),\\
        &\Bb_{\ell}(x) - \E[\Bb_{\ell}(f^k(x,w))|x] - \hat{\eta}_k^T(x) g(x),
    \end{align}
    where $x$ is the state variable over the set $\Xx$ and $w$ is the noise variable over the set $\Ww$. Then the set of functions $\Bb_i(x),\ 0\leq i\leq \ell$, is a $k$-IBC v1 following Definition \ref{def:kibc1}.
\end{lemma}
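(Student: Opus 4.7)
The plan is to verify that each SOS certificate in the program is a sufficient condition for the corresponding inequality of Definition~\ref{def:kibc1}, and that the objective $p$ coincides with the safety lower bound provided by Theorem~\ref{thm:kibc1}. The workhorse is the elementary fact that if a polynomial $q(x)$ is SOS then $q(x) \geq 0$ on all of $\R^n$, and if $\eta(x)$ is a vector of SOS polynomials then $\eta^T(x) g(x) \geq 0$ on the semi-algebraic set $\{x : g(x) \geq 0\}$. Hence, whenever $p(x) - \eta^T(x) g(x)$ is SOS with an SOS-valued multiplier $\eta(x)$, we automatically obtain $p(x) \geq 0$ on that set. This is a direct (non-Positivstellensatz) certificate scheme that is known to be sound but not complete, which is acceptable since the lemma only claims sufficiency.

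First, I would apply this template constraint-by-constraint. Under Assumption~\ref{asp:sos}, $\Xx$, $\Xx_0$, and $\Xx_u$ are the semi-algebraic sets induced by $g(x)$, $g_0(x)$, and $g_u(x)$, respectively. The SOS polynomial $\Bb_i(x) - \hat{\eta}_i^T(x) g(x)$ then directly yields \eqref{eq:kibc1_0}, while the certificates involving $g_0$ and $g_u$ yield \eqref{eq:kibc1_1} and \eqref{eq:kibc1_2}. For the dynamical conditions \eqref{eq:kibc1_3}, \eqref{eq:kibc1_4}, and \eqref{eq:kibc1_5}, the SOS certificates are written against $g(x)$, so they enforce the inequalities on all of $\Xx$; since $\Xx \setminus \Xx_u \subseteq \Xx$, the required conditions of Definition~\ref{def:kibc1} follow a fortiori. (Tightening these certificates by incorporating $-g_u(x)$-type multipliers is possible but unnecessary for soundness.)

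Second, a technical point that must be dispatched is the polynomiality of the expectations $\E[\Bb_{i+1}(f(x,w))\mid x]$, $\E[\Bb_\ell(f(x,w))\mid x]$, and $\E[\Bb_\ell(f^k(x,w_k))\mid x]$ as functions of $x$. Since $f$ is polynomial in $(x,w)$ by Assumption~\ref{asp:sos} and each $\Bb_i$ is polynomial in $x$, the composed expressions are polynomials in $(x,w)$; taking expectations over $w$ replaces each noise monomial by its moment, yielding a polynomial in $x$ whose coefficients are linear combinations of finitely many noise moments. Provided these moments exist and are computable in closed form (as for Gaussian noise, the setting of our case studies), each SOS certificate is indeed a polynomial identity that a semidefinite solver such as TSSOS can handle directly.

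Finally, the objective $p$ is exactly the lower bound furnished by Theorem~\ref{thm:kibc1}, so any feasible tuple $(\Bb_0, \dots, \Bb_\ell, \gamma, c)$ is a $k$-IBC~v1 achieving safety probability at least $p$, with the constraint $0 \leq p \leq 1$ ensuring the bound is meaningful. The main (mild) obstacle in the argument is verifying the moment-integrability and closed-form computability of the expectations so that the SOS formulation remains a finite-dimensional polynomial program; beyond this, the proof amounts to a routine constraint-by-constraint invocation of the SOS-implies-nonnegativity principle on the appropriate semi-algebraic sets, and the SOS formulations for Lemma~\ref{lem:sosIBC} and the $k$-IBC v2 analogue would follow by the same reasoning with the corresponding conditions of Definitions~\ref{def:ibc} and~\ref{def:kibc2} substituted in.
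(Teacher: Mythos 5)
Your proposal is correct and follows exactly the standard argument the paper leaves implicit (the lemma is stated without proof): each SOS certificate, combined with the nonnegativity of the SOS multiplier paired with the defining polynomial vector on the corresponding semi-algebraic set, yields the respective inequality of Definition~\ref{def:kibc1}, with the dynamic conditions holding a fortiori on $\Xx \setminus \Xx_u$ since they are certified on all of $\Xx$. Your observation that the expectations reduce to polynomials in $x$ via the noise moments, and that the objective is precisely the bound of Theorem~\ref{thm:kibc1}, completes the routine soundness check in the same spirit the authors intend.
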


\subsubsection{$k$-IBC v2}
Similarly, under Assumption \ref{asp:sos}, the $k$-IBC v2 conditions \eqref{eq:kibc2_0}-\eqref{eq:kibc2_5} can be formulated as a set of SOS constraints per the following lemma.
\begin{lemma}
    \label{lem:soskibc2}
    Consider a dt-SS $\Sys$. Suppose Assumption \ref{asp:sos} holds for $\Sys$. Suppose there exist constants $\ell \in \N$, $k \in \N_{\geq 1}$, $\alpha_i \in \R_{> 0}$, $0\leq i < \ell$, and $\beta_j \in \R_{> 0}$, $1\leq j < k$, polynomials of same degree $\Bb_i(x)$ and SOS polynomials $\hat{\eta}_i(x)$, $\eta_0(x)$, $\eta_{u,i}(x)$, $\eta_i(x)$, $\eta_{\ell,j}(x)$, $\hat{\eta}_{j,k}(x)$ of appropriate dimensions. 
    The objective-based SOS optimization problem is given as follows:    
    \begin{align}
        \max_{0 \leq \gamma \leq 1}\ & p  = 1 - \gamma \Bigg(1
        + \prod_{i=0}^{\ell-1} \alpha_i
        + \sum_{t=0}^{\ell-2} \prod_{i=0}^{t} \alpha_i 
        + \sum_{j=1}^{\ell-1} \Big(\beta_j  \prod_{i=0}^{\ell-j-1} \alpha_i \Big) 
        + \sum_{j=\ell}^{k-1} \beta_j \Bigg) \nonumber\\
        \mathsf{s.t.}\ & 0 \leq p \leq 1, \\
        &\text{The following are SOS polynomials:} \nonumber\\
        &\Bb_i(x) - \hat{\eta}_i^T(x) g(x) 
        \hspace{5em} \forall~ 0\leq i < \ell+k,\\
        &\gamma - \Bb_0(x) - \eta_0^T(x) g_0(x),\\
        &\Bb_i(x) - 1 - \eta_{u,i}^T(x) g_u(x) 
        \hspace{2.5em} \forall~ 0\leq i < \ell+k,\\
        &\alpha_i \Bb_{i}(x) - \E[\Bb_{i+1}(f(x,w))|x] - \eta_i^T(x) g(x) \nonumber\\
        & \hspace{13em} \forall~0\leq i < \ell,\\
        &\beta_j\Bb_{\ell-j-1}(x) - \E[\Bb_{\ell+j}(f^{2j+1}(x,w_{2j+1})) | x] - \eta_{\ell,j}^T(x)g(x) \nonumber\\
        & \hspace{13em} \forall~ 1\leq j<\ell,\\
        &\beta_j\Bb_{0}(x) - \E[\Bb_{\ell+j}(f^{\ell+j}(x,w_{\ell+j})) | x] - \eta_{\ell,j}^T(x)g(x) \nonumber\\
        & \hspace{13em} \forall~\ell \leq j<k,\\
        &\Bb_{\ell+j}(x) - \E[\Bb_{\ell+j}(f^{k}(x,w_{k})) | x] - \hat{\eta}_{j,k}^T(x)g(x)\nonumber\\
        & \hspace{13em} \forall~ 0\leq j<k,
    \end{align}
    where $x$ is the state variable over the set $\Xx$ and $w$ is the noise variable over the set $\Ww$. Then the set of functions $\Bb_i(x),\ 0\leq i < \ell +k$, is a $k$-IBC v2 following Definition \ref{def:kibc2}.
\end{lemma}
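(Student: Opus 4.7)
The plan is to apply the standard sum-of-squares sufficient-condition argument, identical in spirit to the one justifying Lemma \ref{lem:sosIBC} and Lemma \ref{lem:soskibc1}, simply extended to cover the longer list of constraints in Definition \ref{def:kibc2}. The underlying tool is the elementary fact that if $p(x) - \sigma^T(x)h(x)$ is an SOS polynomial for some SOS vector $\sigma(x)$ and some polynomial vector $h(x)$ defining (element-wise) a semi-algebraic set $V = \{x : h(x) \geq 0\}$, then $p(x) \geq 0$ on $V$, since on $V$ both the SOS expression and $\sigma^T(x)h(x)$ are nonnegative. I would then match each family of SOS constraints in the lemma to a corresponding inequality in Definition \ref{def:kibc2}.

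Before doing so, the first step is the preliminary observation that, under Assumption \ref{asp:sos}, the map $(x,w) \mapsto \Bb_i(f^j(x,w_j))$ is polynomial in $(x,w_j)$ for any $j \in \N_{\geq 1}$, because compositions of polynomial maps remain polynomial. Taking the conditional expectation $\E[\Bb_i(f^j(x,w_j)) \mid x]$ integrates out monomials in the i.i.d.\ noise $w_j$, producing a polynomial in $x$ whose coefficients are moments of the noise distribution (which we assume finite for the relevant degrees). Hence every ``$\E[\cdots]$'' term appearing inside the SOS constraints is itself a polynomial in $x$, and the SOS decomposition statements are well-posed.

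Next, I would go through the six families of constraints in order. The constraint $\Bb_i(x) - \hat{\eta}_i^T(x)g(x)$ being SOS, combined with $g(x) \geq 0$ on $\Xx$, gives $\Bb_i(x) \geq 0$ on $\Xx$ for every $0 \leq i < \ell+k$, which is \eqref{eq:kibc2_0}. The constraint on $\gamma - \Bb_0(x) - \eta_0^T(x) g_0(x)$ and the nonnegativity of $g_0$ on $\Xx_0$ give \eqref{eq:kibc2_1}; the constraints on $\Bb_i(x) - 1 - \eta_{u,i}^T(x)g_u(x)$ give \eqref{eq:kibc2_2}. The four remaining families of constraints involving expectations yield, by the same argument applied on $\Xx$ (which is a superset of $\Xx \setminus \Xx_u$, so the implication is valid even though slightly conservative), conditions \eqref{eq:kibc2_3}, \eqref{eq:kibc2_4a}, \eqref{eq:kibc2_4b}, and \eqref{eq:kibc2_5}, respectively. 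At that point all items of Definition \ref{def:kibc2} are verified, so the collection $\{\Bb_i\}_{0 \leq i < \ell+k}$ is a $k$-IBC v2; the objective $p$ then coincides, by Theorem \ref{thm:kibc2}, with a valid lower bound on the safety probability, and the optimization maximizes this bound.

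I do not anticipate a real mathematical obstacle; the proof is essentially bookkeeping. The only points demanding care are: (i) checking that the multipliers $\eta_{\ell,j}(x)$ and $\hat{\eta}_{j,k}(x)$ match the dimension of the polynomial vector $g(x)$ that describes $\Xx$, so that $\eta_{\ell,j}^T g$ and $\hat{\eta}_{j,k}^T g$ are well-defined scalar polynomials; (ii) making sure the distinction between the two regimes $1 \leq j < \ell$ and $\ell \leq j < k$ in conditions \eqref{eq:kibc2_4a}–\eqref{eq:kibc2_4b} is handled without overlap or omission, and that when $k < \ell$ the second family is simply empty (as already noted after Definition \ref{def:kibc2}); and (iii) ensuring the noise distribution admits enough finite moments for all expectations appearing after compositions of up to $k$ steps to exist as polynomials in $x$. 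None of these is substantive, and with them dispatched the lemma follows.
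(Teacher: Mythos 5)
Your proposal is correct and is exactly the standard generalized-S-procedure/SOS-multiplier argument that the paper relies on (and omits, stating the lemma without proof): each SOS certificate $p(x)-\sigma^T(x)h(x)$ yields $p(x)\ge 0$ on the corresponding semi-algebraic set, the expectation terms are polynomials in $x$ under Assumption \ref{asp:sos}, and enforcing the martingale-type inequalities on all of $\Xx$ soundly (if conservatively) implies them on $\Xx\setminus\Xx_u$. Your three points of care, in particular the emptiness/truncation of the $j$-ranges in conditions \eqref{eq:kibc2_4a}--\eqref{eq:kibc2_4b} and the finiteness of the required noise moments, are precisely the right details to flag.
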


Observe that our formulation alleviates the computational complexity issues typically encountered with SOS in the following manner:
for classical SOS-based barrier certificates, the search problem has polynomial complexity on the order of $O\big(\binom{n+d}{d} \times \binom{n+d}{d}\big)$ \cite[Theorem 3.3]{parrilo2003semidefinite}, where $n$ is the system dimension and $2d$ denotes the degree of the SOS polynomial.
The factor $O\big(\binom{n+d}{d} \times \binom{n+d}{d}\big)$ represents the number of decision variables introduced when the SOS problem is reformulated as an SDP, and this number grows polynomially with the degree $2d$.
By contrast, for interpolation-based barrier certificates, our framework enables the use of lower-degree functions as certificates, and in this setting the additional complexity is only constant, i.e., $O(1)$, as long as the number of such functions is kept fixed.
If we allow $\ell$ to vary, the complexity scales only linearly with $\ell$, that is, it becomes $O(\ell)$ instead of polynomial in $\ell$.
Thus, one can still exploit existing methods while substantially decreasing the computational burden associated with searching for a certificate.

\section{Case Studies}
\label{sec:case}

We illustrate the effectiveness of IBCs and $k$-IBCs on both a Lotka–Volterra-type model and a four-dimensional system model. For a state $x = x(t)$, we write $x'$ to represent $x(t+1)$. A comparison of the different certificates is reported in Table~\ref{tab:compare}. The resulting certificates can be accessed via this link.~\footnote{\href{https://drive.google.com/file/d/1if7aiR8YapQ2NusxLf-h5JtLcvSEJk_z/view?usp=sharing}{Case study certificates}.
}

\subsection{2D Lotka-Volterra Model}
\label{ss:lotka}

For our first case study, we consider the discrete-time Lotka-Volterra type prey–predator model with state variables $v, p$ denoting the victim/prey and the predator, respectively. The dynamics is given by the following difference equations:
\begin{align*}
\begin{cases}
    v'= v + T(\theta v(1-v) - \phi vp) + G_0w,\\
    p'= p - T(\psi p - \delta vp) + G_1w,
\end{cases}
\end{align*}
where $T = 0.1s$ is the sampling time, $\theta = 1.1$ is the growth rate of the prey, $\phi = 0.4$ is the death rate of the prey, $\psi = 0.4$ is the death rate of the predator, $\delta = 0.1$ is the growth rate of the predator, and $G_0 = 0.01$, $G_1 = 0.005$ are the noise coefficients. 
The state set, initial set, and unsafe set are given by $\Xx = [0,10]\times[0,5],\Xx_0 = [6,7]\times[2,3]$, and $\Xx_u = [3,5]\times[0,3]$, respectively. 
We first consider a degree-five polynomial function in two variables as our parametric template of the barrier certificate $\Bb(v,p)$ and attempt to compute suitable coefficients such that $\Bb(v,p)$ is a standard barrier certificate as in Definition~\ref{def:bc} (i.e. IBC with $\ell= 0$). 
We used TSSOS~\cite{wang2021tssos} within Julia to reformulate conditions~\eqref{eq:bar_cond_0}-\eqref{eq:bar_cond_3} as SOS optimization problem as described in the previous section. 
However, we were unable to find a standard barrier certificate.

We subsequently expressed conditions~\eqref{eq:ibc0}-\eqref{eq:ibc4} as an SOS optimization problem using Lemma~\ref{lem:sosIBC}. In this formulation, we chose $\alpha_i = 0.44$, set $\ell_{max} = 3$, and adopted the same parametric structure for $\Bb_i(v,p)$ as in the previous subsection. 
This procedure yielded an IBC characterized by $\ell = 1$ and $\gamma = 0.1$. The resulting functions are depicted in Figure~\ref{fig:ibc}. 
In the figure, the blue and purple shaded areas represent the sublevel sets of $\Bb_0(v,p)$ and $\Bb_1(v,p)$, respectively. 

\begin{figure}[!t]
    \centering
    \epsfig{file=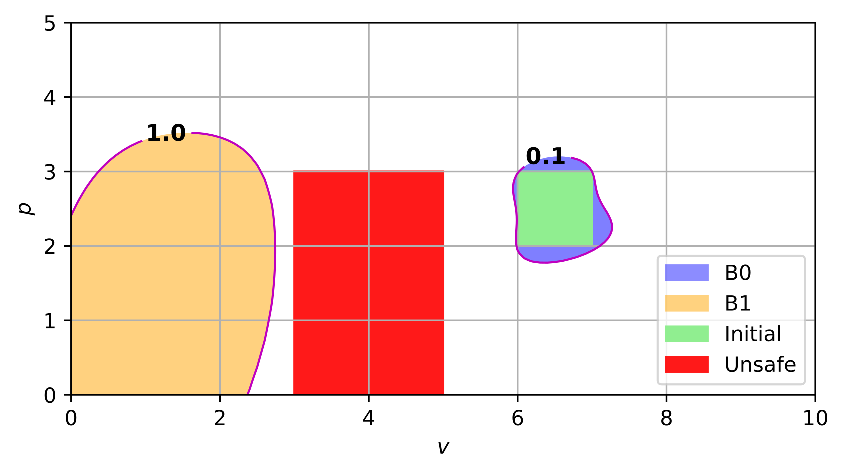, width=0.55\textwidth, keepaspectratio}
    \caption{IBC with $\ell = 1$ for Lotka-Volterra type model. The axes show the state variables $v$ and $p$. The blue and purple shaded regions show the sets $\{x: 0\leq \Bb_0(v,p)\leq \gamma\}$ and $\{x: 0\leq \Bb_1(v,p)<1\}$, respectively.} 
    \label{fig:ibc}
\end{figure}

\subsection{4D System}
\label{ss:4d}

For our second case study, we examine the following four-dimensional dynamical system, adapted from~\cite{bhattacharjee2024discrete}:
\begin{align*}
&\begin{cases}
    x_1' = x_1 + T(r x_2 - (b + d_1) x_1 -h x_1 x_4) + G_0 w\\
    x_2' = x_2 + T(b x_1 - b x_2^2 - h x_2 x_4 - d_2 x_2) + G_0 w\\
    x_3' = x_3 + T(\alpha h x_1 x_4 + \beta h x_2 x_4 - d_3 x_3 -\xi x_2 x_4) + G_1 w\\
    x_4' = x_4 + T (n x_3 - d_4 x_4) + G_1 w,
\end{cases}
\end{align*}
with parameters $r = 1.6$, $b = 0.3$, $h = 20$, $\alpha = 0.2$, $\beta = 0.2$, $\xi = 0.08$, $n = 0.3$, $d_1 = 0.08$, $d_2 = 0.06$, $d_3 = 0.8$, $d_4 = 0.5$, $T = 0.01$, $G_0 = 0.005$, and $G_1 = 0.001$. 
The state space, initial region, and unsafe region are given by $\Xx = [0,10]^4$, $\Xx_0 = [6.5,7]\times[5.5,6]\times[4.5,5]\times[3.5,4]$, and $\Xx_u = [3,5]\times[3,4]\times[0,8]\times[0,5]$, respectively. 
As a parametric template for the barrier certificate $\Bb(x_1,x_2,x_3,x_4)$, we use multivariate polynomials in four variables and search for coefficients that render $\Bb(x_1,x_2,x_3,x_4)$ a valid certificate. 
Observe that a $k$-IBC v1 with $\ell = 0$, $k = 1$ coincides with a classical barrier certificate; for $\ell = 0$ it specializes to a $k$-BC; and for $k = 1$ it reduces to an IBC. 
We utilize TSSOS~\cite{wang2021tssos} in Julia to cast conditions~\eqref{eq:kibc1_0}-\eqref{eq:kibc1_5} into an SOS optimization problem via Lemma~\ref{lem:soskibc1}. 
We choose $\ell_{max} = 2$, $k_{max} = 3$, $degree_{max} = 8$, and sweep $\alpha_i$ over the interval $[0.3,1]$. 
With these settings, we obtain a degree-five $k$-IBC v1 with $\ell = 1$, $k = 2$, $\gamma = c = 10^{-4}$, and all $\alpha_i = 0.4$, which yields a safety probability lower bound of $0.9997$.

\begin{table}[!t]
\centering
\caption{Summary of certificates for the case studies.}
\begin{tabular}{ |p{0.07\linewidth}|p{0.23\linewidth}|p{0.13\linewidth}|p{0.16\linewidth}|p{0.1\linewidth}|  }
 \hline
System & Method & Degree & Computation Times (s) & Probability\\
\hline
 \multirow{2}{*}{2D}  & BC & NF ($\leq 8$) & $-$ & $-$\\
 & $k$-BC ($k \leq 3$) & NF ($\leq 6$) & $-$ & $-$\\
 & IBC ($\ell = 1$) & $5$ & $13.80$ & $0.856$\\
 \hline
 \multirow{3}{*}{4D}  & BC & NF ($\leq 8$) & $-$ & $-$\\
 & $k$-BC ($k \leq 3$) & NF ($\leq 5$) & $-$ & $-$\\
 & IBC ($\ell = 2$) & $6$ & $207.23$ & $0.9997$\\
 & $k$-IBC~v1 ($\ell = 1$, $ k = 2$) & $5$ & $1739.42$ & $0.9997$\\
 \hline
\end{tabular}
{\raggedright \vspace{0.1em} * NF = Not Found \par}
\label{tab:compare}
\end{table}

\section{Conclusion}
\label{sec:conclusion}
We proposed a notion of interpolation-inspired barrier certificate (IBC) and $k$-inductive interpolation-inspired barrier certificate ($k$-IBC) for stochastic dynamical systems.
These certificates relax the conditions of a standard barrier certificate by incrementally finding functions that together guarantee safety. 
We presented SOS optimization as a prominent technique of computing this set of functions under mild assumptions.
Using an example and case studies, we demonstrated that given a barrier certificate template, one may find IBC and $k$-IBC with better lower bound on safety probability even when standard barrier certificates could not be computed. 
Given that SOS-based approaches do not computationally perform well for systems with high dimensions, we hope that the potential to find multiple low degree polynomials via IBC and $k$-IBC will alleviate these concerns. 
As future work, we plan to investigate data-driven and neural network based approaches to find IBCs and $k$-IBCs as well as explore their use in controller synthesis.
We also plan to investigate how to automate the search for combinations of interpolation and $k$-induction with the help of counterexamples.

\bibliographystyle{alpha}
\bibliography{reference.bib}

@article{jagtap_2020_formal,
  title={Formal synthesis of stochastic systems via control barrier certificates},
  author={Jagtap, Pushpak and Soudjani, Sadegh and Zamani, Majid},
  journal={IEEE Transactions on Automatic Control},
  volume={66},
  number={7},
  pages={3097--3110},
  year={2020},
  publisher={IEEE}
}

@inproceedings{prajna2004safety,
  title={Safety verification of hybrid systems using barrier certificates},
  author={Prajna, Stephen and Jadbabaie, Ali},
  booktitle={International Workshop on Hybrid Systems: Computation and Control},
  pages={477--492},
  year={2004},
  organization={Springer}
}

@inproceedings{prajna2004stochastic,
  title={Stochastic safety verification using barrier certificates},
  author={Prajna, Stephen and Jadbabaie, Ali and Pappas, George J},
  booktitle={2004 43rd IEEE conference on decision and control (CDC)(IEEE Cat. No. 04CH37601)},
  volume={1},
  pages={929--934},
  year={2004},
  organization={IEEE}
}

@article{prajna2007framework,
  title={A framework for worst-case and stochastic safety verification using barrier certificates},
  author={Prajna, Stephen and Jadbabaie, Ali and Pappas, George J},
  journal={IEEE Transactions on Automatic Control},
  volume={52},
  number={8},
  pages={1415--1428},
  year={2007},
  publisher={IEEE}
}

@inproceedings{mcmillan2003interpolation,
  title={Interpolation and SAT-based model checking},
  author={McMillan, Kenneth L},
  booktitle={Computer Aided Verification: 15th International Conference, CAV 2003, Boulder, CO, USA, July 8-12, 2003. Proceedings 15},
  pages={1--13},
  year={2003},
  organization={Springer}
}

@article{de2011satisfiability,
  title={Satisfiability modulo theories: introduction and applications},
  author={De Moura, Leonardo and Bj{\o}rner, Nikolaj},
  journal={Communications of the ACM},
  volume={54},
  number={9},
  pages={69--77},
  year={2011},
  publisher={ACM New York, NY, USA}
}

@inproceedings{bradley2011sat,
  title={SAT-based model checking without unrolling},
  author={Bradley, Aaron R},
  booktitle={International Workshop on Verification, Model Checking, and Abstract Interpretation},
  pages={70--87},
  year={2011},
  organization={Springer}
}

@inproceedings{bradley2012understanding,
  title={Understanding ic3},
  author={Bradley, Aaron R},
  booktitle={International Conference on Theory and Applications of Satisfiability Testing},
  pages={1--14},
  year={2012},
  organization={Springer}
}

@book{troelstra2000basic,
  title={Basic proof theory},
  author={Troelstra, Anne Sjerp and Schwichtenberg, Helmut},
  number={43},
  year={2000},
  publisher={Cambridge University Press}
}

@inproceedings{de2008z3,
  title={Z3: An efficient SMT solver},
  author={De Moura, Leonardo and Bj{\o}rner, Nikolaj},
  booktitle={International conference on Tools and Algorithms for the Construction and Analysis of Systems},
  pages={337--340},
  year={2008},
  organization={Springer}
}

@article{wang2021tssos,
  title={TSSOS: A moment-SOS hierarchy that exploits term sparsity},
  author={Wang, Jie and Magron, Victor and Lasserre, Jean-Bernard},
  journal={SIAM Journal on optimization},
  volume={31},
  number={1},
  pages={30--58},
  year={2021},
  publisher={SIAM}
}

@inproceedings{anand2021safety,
  title={Safety verification of dynamical systems via k-inductive barrier certificates},
  author={Anand, Mahathi and Murali, Vishnu and Trivedi, Ashutosh and Zamani, Majid},
  booktitle={2021 60th IEEE Conference on Decision and Control (CDC)},
  pages={1314--1320},
  year={2021},
  organization={IEEE}
}

@article{cabodi2008strengthening,
  title={Strengthening model checking techniques with inductive invariants},
  author={Cabodi, Gianpiero and Nocco, Sergio and Quer, Stefano},
  journal={IEEE transactions on computer-aided design of integrated circuits and systems},
  volume={28},
  number={1},
  pages={154--158},
  year={2008},
  publisher={IEEE}
}

@inproceedings{zhang2004incremental,
  title={Incremental deductive \& inductive reasoning for SAT-based bounded model checking},
  author={Zhang, Liang and Prasad, Mukul R and Hsiao, Michael S},
  booktitle={IEEE/ACM International Conference on Computer Aided Design, 2004. ICCAD-2004.},
  pages={502--509},
  year={2004},
  organization={IEEE}
}

@inproceedings{anand2022kstochastic,
  title={K-inductive barrier certificates for stochastic systems},
  author={Anand, Mahathi and Murali, Vishnu and Trivedi, Ashutosh and Zamani, Majid},
  booktitle={Proceedings of the 25th ACM International Conference on Hybrid Systems: Computation and Control},
  pages={1--11},
  year={2022}
}

@article{parrilo2003semidefinite,
  title={Semidefinite programming relaxations for semialgebraic problems},
  author={Parrilo, Pablo A},
  journal={Mathematical programming},
  volume={96},
  pages={293--320},
  year={2003},
  publisher={Springer}
}

@inproceedings{sogokon2018vector,
  title={Vector barrier certificates and comparison systems},
  author={Sogokon, Andrew and Ghorbal, Khalil and Tan, Yong Kiam and Platzer, Andr{\'e}},
  booktitle={International Symposium on Formal Methods},
  pages={418--437},
  year={2018},
  organization={Springer}
}

@inproceedings{anand2019verification,
  title={Verification of switched stochastic systems via barrier certificates},
  author={Anand, Mahathi and Jagtap, Pushpak and Zamani, Majid},
  booktitle={2019 IEEE 58th Conference on Decision and Control (CDC)},
  pages={4373--4378},
  year={2019},
  organization={IEEE}
}

@article{huang2017probabilistic,
  title={Probabilistic safety verification of stochastic hybrid systems using barrier certificates},
  author={Huang, Chao and Chen, Xin and Lin, Wang and Yang, Zhengfeng and Li, Xuandong},
  journal={ACM Transactions on Embedded Computing Systems (TECS)},
  volume={16},
  number={5s},
  pages={1--19},
  year={2017},
  publisher={ACM New York, NY, USA}
}

@inproceedings{abate2021fossil,
  title={FOSSIL: a software tool for the formal synthesis of lyapunov functions and barrier certificates using neural networks},
  author={Abate, Alessandro and Ahmed, Daniele and Edwards, Alec and Giacobbe, Mirco and Peruffo, Andrea},
  booktitle={Proceedings of the 24th international conference on hybrid systems: computation and control},
  pages={1--11},
  year={2021}
}

@inproceedings{feng2020unbounded,
  title={Unbounded-time safety verification of stochastic differential dynamics},
  author={Feng, Shenghua and Chen, Mingshuai and Xue, Bai and Sankaranarayanan, Sriram and Zhan, Naijun},
  booktitle={International Conference on Computer Aided Verification},
  pages={327--348},
  year={2020},
  organization={Springer}
}

@article{lewis2024verification,
  title={Verification of Quantum Circuits through Discrete-Time Barrier Certificates},
  author={Lewis, Marco and Soudjani, Sadegh and Zuliani, Paolo},
  journal={arXiv preprint arXiv:2408.07591},
  year={2024}
}

@inproceedings{sheeran2000checking,
  title={Checking safety properties using induction and a SAT-solver},
  author={Sheeran, Mary and Singh, Satnam and St{\aa}lmarck, Gunnar},
  booktitle={International conference on formal methods in computer-aided design},
  pages={127--144},
  year={2000},
  organization={Springer}
}

@inproceedings{donaldson2011software,
  title={Software verification using k-induction},
  author={Donaldson, Alastair F and Haller, Leopold and Kroening, Daniel and R{\"u}mmer, Philipp},
  booktitle={Static Analysis: 18th International Symposium, SAS 2011, Venice, Italy, September 14-16, 2011. Proceedings 18},
  pages={351--368},
  year={2011},
  organization={Springer}
}

@article{xue2024sufficient,
  title={Sufficient and Necessary Barrier-like Conditions for Safety and Reach-avoid Verification of Stochastic Discrete-time Systems},
  author={Xue, Bai},
  journal={arXiv preprint arXiv:2408.15572},
  year={2024}
}

@article{clark2021control,
  title={Control barrier functions for stochastic systems},
  author={Clark, Andrew},
  journal={Automatica},
  volume={130},
  pages={109688},
  year={2021},
  publisher={Elsevier}
}

@article{xue2024reach,
  title={Reach-avoid controllers synthesis for safety critical systems},
  author={Xue, Bai},
  journal={IEEE Transactions on Automatic Control},
  year={2024},
  publisher={IEEE}
}

@inproceedings{berger2024cone,
  title={Cone-Based Abstract Interpretation for Nonlinear Positive Invariant Synthesis},
  author={Berger, Guillaume and Ghanbarpour, Masoumeh and Sankaranarayanan, Sriram},
  booktitle={Proceedings of the 27th ACM International Conference on Hybrid Systems: Computation and Control},
  pages={1--16},
  year={2024}
}

@book{kushner1967stochastic,
  author={Kushner, Harold Joseph},
  title={Stochastic Stability and Control},
  isbn={9780080955407},
  series={Mathematics in Science and Engineering},
  year={1967},
  publisher={Academic Press}
}

@article{bhattacharjee2024discrete,
  title={A discrete-time dynamical model of prey and stage-structured predator with juvenile hunting incorporating negative effects of prey refuge},
  author={Bhattacharjee, Debasish and Ray, Nabajit and Das, Dipam and Sarmah, Hemanta Kumar},
  journal={Partial Differential Equations in Applied Mathematics},
  volume={10},
  pages={100710},
  year={2024},
  publisher={Elsevier}
}

@inproceedings{abate2025quantitative,
  title={Quantitative supermartingale certificates},
  author={Abate, Alessandro and Giacobbe, Mirco and Roy, Diptarko},
  booktitle={International Conference on Computer Aided Verification},
  pages={3--28},
  year={2025},
  organization={Springer}
}

@inproceedings{henzinger2025supermartingale,
  title={Supermartingale certificates for quantitative omega-regular verification and control},
  author={Henzinger, Thomas A and Mallik, Kaushik and Sadeghi, Pouya and {\v{Z}}ikeli{\'c}, {\DJ}or{\dj}e},
  booktitle={International Conference on Computer Aided Verification},
  pages={29--55},
  year={2025},
  organization={Springer}
}

\appendix
\label{appendix}

\section{Inductive Invariants and Interpolation}
\label{appendix:iii}

We describe the notion of inductive invariants as discussed in \cite{bradley2011sat}. 
Consider a nonstochastic finite-state system, where the state set is a set of logical values while the initial set of states and transition map are described by propositional logical formulae. 
That is, $\Xx \subseteq \{\mathsf{true},\mathsf{false}\}^n$, $\Xx_0 = \{x : I(x) = \mathsf{true} \}$, and $x' = f(x)$  such that $T(x,x') = \mathsf{true}$, where the formula $I(x)$ is a logical formula describing the initial condition over the states of the system $x$, and $T(x,x')$ is a logical formula representing the transition relation from a current state $x$ to a next state $x'$. 
We look at a safety property expressed by a logical formula $P(x)$ described over the state variable $x \in \Xx$. 
We say that such a system satisfies a safety property if, for every reachable state $x \in \Xx$ from the initial set, $P(x) = \mathsf{true}$. 
A prominent effective approach to prove safety is through the use of inductive invariants. 
A formula $Q$ is said to be an inductive invariant, if: 
\begin{itemize}
    \item $\forall x \in \Xx$, we have $I(x) \implies Q(x)$, and
    \item $\forall x, x' \in \Xx$, we have $Q(x) \land T(x,x') \implies Q(x')$. 
\end{itemize}

Observe that any reachable state $x$ satisfies an inductive invariant formula $Q$. 
Thus, showing that a safety property $P$ is an inductive invariant acts as a proof of safety. 
When we fail to prove $P$ to be an inductive invariant (that is $I(x)\notimplies P(x)$ and/or $P(x) \land T(x,x') \notimplies P(x')$), we try to \emph{strengthen} $P$. 
Property $\underline{P}$ is said to be an inductive strengthening of a non-inductive safety property $P$ if there exists a formula $F$ such that $\underline{P} = F \land P$ is inductive. 
Interpolation \cite{mcmillan2003interpolation} is one of these techniques used in the inductive strengthening process.

For interpolation, we unroll the transition relation for some $\ell \in \mathbb{N}$ times and construct a formula representing all possible execution paths from an initial state (assuming all states before the $\ell^{th}$ step are safe). $x_i$ is the state after the $i^{th}$ transition. Then the sequence of states for this unrolling is given by:
\begin{align}
    \label{eq:unrolling}
    I(x_0)\land T(x_0,x_1)\land \dots \land T(x_{\ell-1},x_{\ell})\land \neg P(x_\ell).
\end{align} 
For $\ell = 0$, the formula reduces down to $I(x_0) \land \neg P(x_0)$. 

If formula \eqref{eq:unrolling} is satisfied, then we conclude that the system is unsafe. Otherwise, we use interpolation to try to prove safety by finding intermediate logical formulae called interpolants via Craig's interpolation theorem
\cite{mcmillan2003interpolation} as follows.

\begin{theorem}[Craig's interpolation theorem]
\label{thm:craig_interp}
    Given a pair of clauses (a disjunction of boolean variables or their negation) $E$ and $G$ such that $E \land G$ is unsatisfiable, there exists an intermediate interpolant clause $F$ such that:
    \begin{itemize}
        \item $E \implies F$,
        \item $F \land G$ is unsatisfiable, and
        \item $F$ refers to the common variables of $E$ and $G$.
    \end{itemize}
\end{theorem}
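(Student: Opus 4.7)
The plan is to prove Craig's interpolation theorem by a semantic (model-theoretic) construction, which is the standard approach in the propositional setting. First, I would partition the variables occurring in $E \land G$ into three disjoint sets: $V_E$, the variables appearing only in $E$; $V_G$, the variables appearing only in $G$; and $V$, the set of common variables. Each of these sets is finite.

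Next, I would define the interpolant $F$ as the existential projection of $E$ onto the common variables. Concretely, for a truth assignment $\tau$ on $V$, declare $F(\tau) = \mathsf{true}$ if and only if there exists an extension $\sigma$ of $\tau$ to $V \cup V_E$ with $\sigma \models E$. Since the variable set is finite, $F$ can be written as an explicit propositional formula over $V$, for instance as the disjunction, over all satisfying assignments of $E$, of the conjunctions describing their $V$-restrictions.

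The three required properties then follow directly. For the implication $E \implies F$: any $\sigma \models E$ restricts to $\sigma|_V$, which satisfies $F$ by taking $\sigma$ itself as the witnessing extension. The fact that $F$ refers only to common variables is immediate from the definition. For the unsatisfiability of $F \land G$, suppose toward a contradiction that some assignment $\sigma$ on $V \cup V_G$ satisfies both $F$ and $G$. Letting $\tau = \sigma|_V$, the condition $\tau \models F$ yields an extension $\sigma_E$ of $\tau$ to $V \cup V_E$ with $\sigma_E \models E$. Because $\sigma_E$ and $\sigma$ agree on $V$, they merge into a single assignment on $V \cup V_E \cup V_G$ that satisfies $E \land G$, contradicting the hypothesis.

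The main obstacle will be ensuring that $F$ is indeed a \emph{clause} rather than an arbitrary Boolean formula, as demanded by the statement. The semantic construction above only guarantees a Boolean function on $V$, which need not be a clause. To meet the syntactic requirement, I would switch to the Kraj\'\i\v cek--Pud\'ak style construction driven by a resolution refutation of $E \land G$ (which exists by completeness of propositional resolution): attach to each initial $E$-clause its projection onto $V$, attach $\bot$ to each initial $G$-clause, and at every resolution step combine the two subinterpolants by $\lor$ or $\land$ according to whether the pivot variable lies in $V$ or only in $V_E \cup V_G$. The label of the empty clause at the root of the refutation then satisfies the three properties, and in particular retains the clausal shape required by the statement in the simple case where $E$ and $G$ are themselves single clauses.
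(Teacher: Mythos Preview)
The paper does not give its own proof of this theorem; immediately after the statement it writes ``The proof of Theorem~\ref{thm:craig_interp} can be found in \cite{troelstra2000basic}.'' So there is no in-paper argument to compare against. Your semantic construction---defining $F$ as the existential projection of $E$ onto the shared variables $V$ and verifying the three properties by the merge-of-assignments argument---is the standard and correct proof of propositional Craig interpolation, and it is exactly the kind of argument one finds in the cited textbook.

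Two comments on the remainder of your write-up. First, the worry about $F$ being a \emph{clause} is largely moot for the statement as phrased: if $E$ and $G$ are genuinely single clauses with $E\land G$ unsatisfiable, then apart from the degenerate empty/tautological cases one checks that $E$ and $G$ must be complementary unit literals on a single shared variable, and then your semantic $F$ equals $E$ itself, which is already a clause. So the resolution-based detour is unnecessary here (and the paper's use of ``clause'' is, in any case, informal). Second, your resolution sketch has the labels inverted: in the McMillan/Pudl\'ak systems the initial $G$-clauses are labelled $\top$, not $\bot$, and the $\lor$/$\land$ choice at a resolution step depends on whether the pivot is local to $E$, local to $G$, or shared---not simply on whether it lies in $V$ versus $V_E\cup V_G$. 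These are easy fixes and do not affect the core of your argument.
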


The proof of Theorem \ref{thm:craig_interp} can be found in \cite{troelstra2000basic}. 

Based on this theorem, when formula \eqref{eq:unrolling} is unsatisfiable, there exists intermediate formulae $F_j$ such that formula \eqref{eq:unrolling} can be broken down as follows: 

\begin{align}
    &\underbrace{I(x_0)}_{E_0(x_0)}\land \underbrace{T(x_0,x_1)\land \dots \land T(x_{\ell-1},x_{\ell})\land \neg P(x_{\ell})}_{G_0(x_0,x_1\dots,x_{\ell})} \\ &\hspace{5em}\text{is unsatisfiable, then} 
    \nonumber \\
    &\text{$G_0$ can iteratively be separated as follows:} \nonumber\\
    \label{eq:intrp}
    &\begin{cases} 
      I(x_0) \implies F_0(x_0)\\
      F_0(x_0) \land T(x_0,x_1) \implies F_1(x_1)\\
      \vdots\\
      F_{\ell-1}(x_{\ell-1}) \land T(x_{\ell-1},x_{\ell}) \implies F_{\ell}(x_{\ell}) \text{, and}\\
      F_{\ell}(x_\ell)\land \neg P(x_{\ell}) \text{  is unsatisfiable.}\\
    \end{cases}&
\end{align}

Condition \eqref{eq:intrp} says that the set of $x_j \in \Xx$ where the formula $F_j(x_j)$ is true is an over-approximation of states reachable in $j$ steps and states satisfying $F_j(x_j)$ will not violate the safety property after $(\ell-j)$ transitions. The interpolants can be computed as shown in \cite{bradley2011sat, bradley2012understanding}. We start with $\ell = 0$ and incrementally compute a sequence of interpolants $F_0(x_0) = I(x_0), F_1(x_1), \dots, F_{\ell}(x_{\ell})$ by setting $E(x_{i},x_{i+1}) = F(x_{i})\land T(x_{i},x_{i+1})$ and $G(x_{i+1},\dots,x_{\ell}) = T(x_{i+1},x_{i+2})\land \dots \land T(x_{\ell-1},x_{\ell})$ $\land \neg P(x_{\ell})$ according to Theorem \ref{thm:craig_interp}. 
This iterative process is stopped when the union of the initial formula and all computed interpolants contains all reachable states.
Bringing it to barrier certificates, $F_i(x_i)$ looks like $(0 \leq \Bb_i(x) \leq \gamma \prod_{j=0}^{i-1} \alpha_j)$ and the final inductive invariant that is a proof of safety would be $\cup_{i = 0}^{\ell} \{x: 0 \leq \Bb_i(x) \leq \gamma \prod_{j=0}^{i-1} \alpha_j \}$.
See the next section for intuition of how we reach at these sets.

\section{Intuitions of certificates with diagrams}

Here, we present the intuition behind our proposed certificates, mainly using the nonstochastic setting.

\subsection{IBC}
\label{appendix:ibc}

\begin{figure}[!t]
    \centering
    \epsfig{file=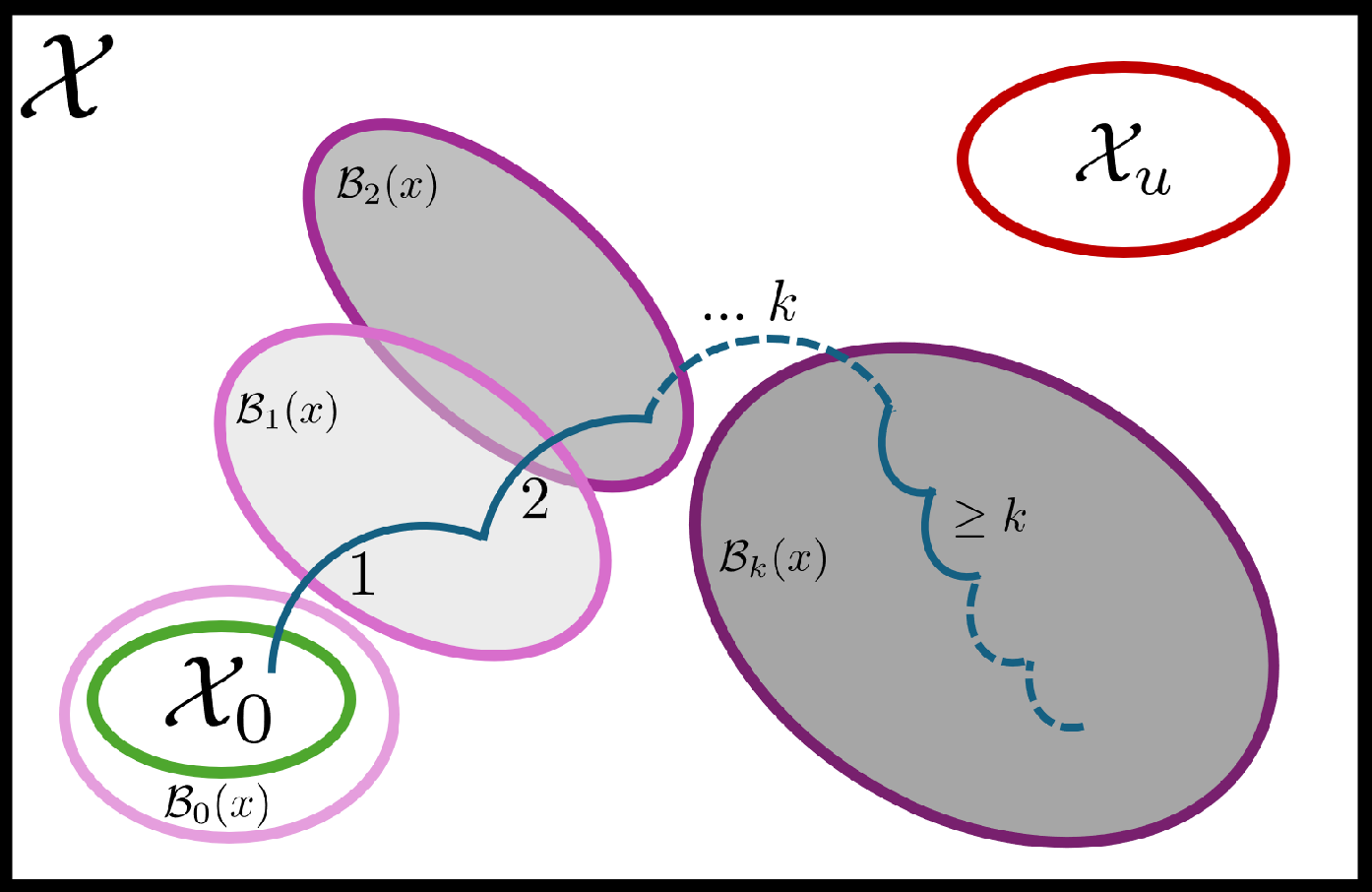, width=0.55\textwidth, keepaspectratio}
    \caption{Diagram of an IBC for a nonstochastic system. As each function serves as an overapproximation, there may be overlaps. The union of all the functions gives the inductive invariant set.} 
    \label{fig:ibc_diagram}
\end{figure}

In the nonstochastic case, conditions~\eqref{eq:ibc3} and~\eqref{eq:ibc4} would look like $\Bb_{i+1}(f(x)) \leq \alpha_i \Bb_i(x)$ and $\Bb_{\ell}(f(x)) \leq \Bb_{\ell}(x)$, respectively. 
Here, in condition~\eqref{eq:ibc3}, $\Bb_{i+1}$ is introduced as part of the (logical) interpolation procedure as introduced in Appendix A. 
For the sake of demonstration, take $\ell = 2$. 
From here, we have $\Bb_1(f(x)) \leq \alpha_0 \Bb_0(x) \leq \alpha_0 \gamma$ and $\Bb_2(f(x)) \leq \alpha_1 \Bb_1(x) \leq \alpha_0 \alpha_1 \gamma$.
As such, the sets $\{x : 0\leq \Bb_1(x) \leq \alpha_0 \gamma\}$ and $\{x : 0\leq \Bb_2(x) \leq \alpha_0 \alpha_1 \gamma\}$ overapproximate sets of states reachable in \emph{one} and \emph{two} steps from the initial set, respectively. 
We additionally have $\Bb_2(f(x)) \leq \Bb_2(x) \leq \alpha_0 \alpha_1 \gamma$ which makes the set $\{x : 0\leq \Bb_2(x) \leq \alpha_0 \alpha_1 \gamma\}$ also overapproximate set of states reachable in \emph{more than two} steps.
Thus, the sets of $\Bb_i$ reflect an overapproximation of the set of states reachable in $i$ steps while the set of $\Bb_{\ell}$ reflect the set of states reachable in $\geq \ell$ steps.
This is how $\ell$, a subscript of the functions, translates to corresponding units of time.
Consequently, $\{x : 0\leq \Bb_0(x) \leq \gamma\} \cup \{x : 0\leq \Bb_1(x) \leq \alpha_0 \gamma\} \cup \{x : 0\leq \Bb_2(x) \leq \alpha_0 \alpha_1 \gamma\}$ overapproximates all the reachable sets of the system. 

In the stochastic setting, we no longer have the advantage of cleanly partitioning states in terms of reachability according to each function’s set, as we did in the deterministic (nonstochastic) case. 
Nonetheless, we can leverage the concept of multiple functions by working with expectations along the system’s evolution and then computing the corresponding safety probability, as encoded in conditions~\eqref{eq:ibc0}-\eqref{eq:ibc4}.

\subsection{$k$-BC and $k$-IBC}
\label{appendix:kbc}

\begin{figure}[!t]
    \centering
    \epsfig{file=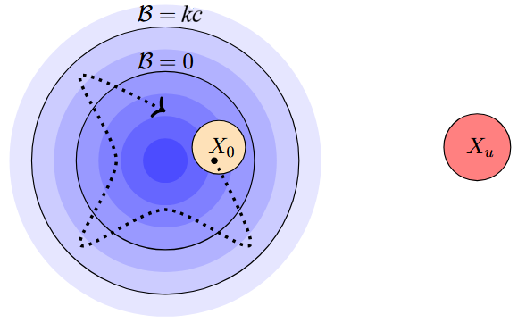, width=0.55\textwidth, keepaspectratio}
    \caption{Diagram of a single-function $k$-BC for a nonstochastic system. Every $k$ steps, the trajectory ends in the zero-sublevel set of the certificate.} 
    \label{fig:kbc_diagram}
\end{figure}

\begin{figure}[!t]
    \centering
    \epsfig{file=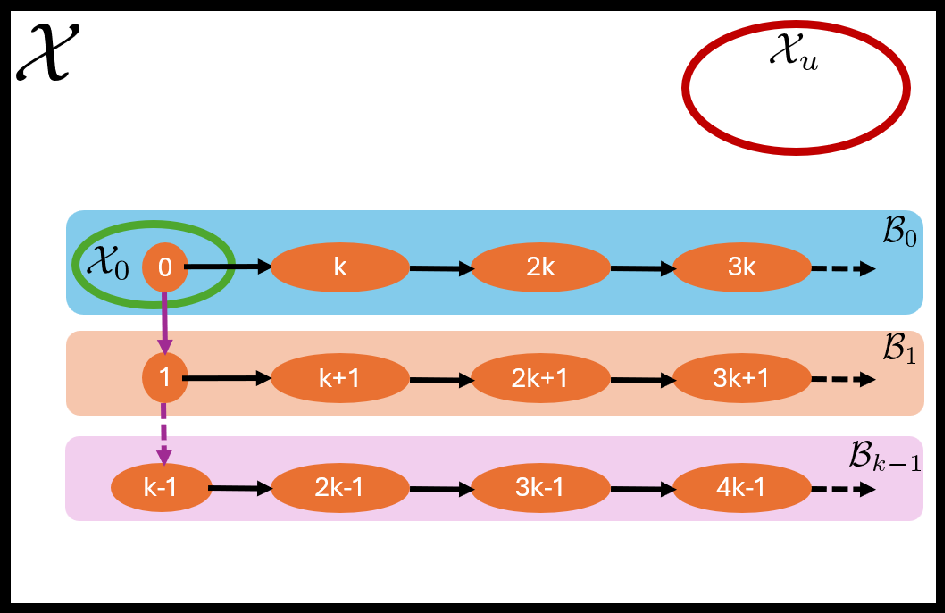, width=0.55\textwidth, keepaspectratio}
    \caption{Diagram of a multi-function $k$-BC for a nonstochastic system. The horizontal black arrows represent $k$ step transitions while the vertical purple arrows are for one step transitions.} 
    \label{fig:kbc_multi_diagram}
\end{figure}

The central idea of a single-function $k$-BC for a nonstochastic system is that the function is permitted to increase by at most a bounded amount $c$ at each step, for up to $k-1$ consecutive steps, but over any sequence of $k$ steps its value must never increase overall. This concept is illustrated in Figure~\ref{fig:kbc_diagram}, adapted from~\cite{anand2021safety}. 

For a multi-function $k$-BC, in the nonstochastic setting, the terms $\E[\Bb_{i}(f^i(x_0,w_i)) | x_0]$ and $\E[\Bb_i(f^k(x,w_k)) | x]$ from conditions~\eqref{eq:kbc3} and~\eqref{eq:kbc4} should be replaced by $\Bb_{i}(f^i(x_0))$ and $\Bb_i(f^k(x))$, respectively.
The multiple functions of a $k$-BC partition time so that the $i^{th}$ function overapproximates the states reachable at time steps of the form $t = nk + i$ for some nonnegative integer $n$.
In this way, all possible trajectories of the system—that is, all reachable states—are still fully captured by the formulation.
Note that if we can construct a single function that serves as a $k$-BC, then we can also construct a collection of functions satisfying the $k$-BC conditions.
However, the reverse implication does not necessarily hold.

For $k$-IBC v1, we retain the IBC condition while relaxing the requirement on $\Bb_\ell$ by altering the supermartingale constraint: we use a $c$-martingale condition for the intermediate transitions and enforce a supermartingale condition only every $k$ steps. In $k$-IBC v2, we introduce a specific scheme that integrates IBC with $k$-BC based on multiple functions.

\end{document}